\newtheorem{Prop}{Proposition}[section]
\newtheorem{Lemma}[Prop]{Lemma}
\newtheorem{Thm}{Theorem}
\newtheorem{Def}[Prop]{Definition}
\newtheorem{Rem}[Prop]{Remark}
\newcommand{\WL}{\operatorname{WL}}
\newcommand{\WR}{\operatorname{WR}}
\newcommand{\Wie}{\operatorname{W}}
\numberwithin{equation}{section}
\title{Wieland gyration for triangular fully packed loop configurations}
\author{Sabine Beil}
\address{Sabine Beil, Universit\"at Wien, Fakult\"at f\"ur Mathematik, Oskar-Morgenstern-Platz~1, 1090 Wien, Austria}
\email{sabine.beil@univie.ac.at}
\author{Ilse Fischer}
\address{Ilse Fischer, Universit\"at Wien, Fakult\"at f\"ur Mathematik, Oskar-Morgenstern-Platz~1, 1090 Wien, Austria}
\email{ilse.fischer@univie.ac.at}
\author{Philippe Nadeau}
\address{Philippe Nadeau, CNRS, Institut Camille Jordan, Universit\'e Lyon 1, 43 boulevard du 11 novembre 1918, 69622 Villeurbanne cedex, France }
\email{nadeau@math.univ-lyon1.fr}
\thanks{Supported by the Austrian Science Foundation FWF, START grant Y463. PN was also  supported by the French Research Agency ANR, project  ANR-11-JS02-001.
}
\begin{document}

\begin{abstract}
Triangular fully packed loop configurations (TFPLs) emerged as auxiliary objects in the study of fully packed loop configurations on a square (FPLs) corresponding to link patterns with a large number of nested arches. 
Wieland gyration, on the other hand, was invented to show the rotational invariance of the numbers $A_\pi$ of FPLs corresponding to a given link pattern $\pi$. The focus of this article is the definition and study of 
Wieland gyration on \mbox{TFPLs}. We show that the repeated application of this gyration eventually leads to a configuration that is left invariant. We also provide a characterization of such stable configurations. 
Finally we apply our gyration to the study of TFPL configurations, in particular giving new and simple proofs of several results. 
\end{abstract}

\maketitle

\section*{Introduction}

Triangular fully packed loop configurations (TFPLs) first appeared in the study of ordinary fully packed loop configurations (FPLs). There they were used to show that the number of FPLs
corresponding to a given link pattern with $m$ nested arches is a polynomial in $m$, see~\cite{CKLN}. It soon turned out that TFPLs possess a number of nice properties, which made them worthy objects of study by themselves. For instance, they can be seen as a generalized model for Littlewood--Richardson coefficients, thereby establishing an unexpected link to algebra. This was first proven in~\cite{Nadeau2} by a convoluted argument and later in~\cite{TFPL} in a direct combinatorial manner and in a more general setting. Other combinatorial aspects of TFPLs, many of them still conjectural, are studied in~\cite{Nadeau1,Thapper}.
 
In 2000 Wieland~\cite{Wieland1} invented the operation on FPLs which bears his name. The \emph{Wieland gyration} was used to prove the rotational invariance of the numbers $A_\pi$ of FPLs corresponding to a given link 
pattern $\pi$. It was later heavily used by Cantini and Sportiello~\cite{CanSport} to prove the Razumov--Stroganov conjecture. It also  came up in connection with TFPLs already in~\cite{Nadeau1} following work of~\cite{Thapper}. \medskip

The main contribution of this article is the explicit definition of Wieland gyration for TFPLs  together with a detailed study of some of its properties. 

While the usual Wieland gyration of FPLs is an involution, our \emph{left-Wieland gyration} $\WL$ acting on TFPLs is not. By a finiteness argument, the sequence \hbox{$(\WL^m(f))_{m\geq 0}$} is
eventually periodic. In Theorem \ref{Thm:EventuallyStable}, it will be proven that the length of the period is always one, which means one always reaches a TFPL which is invariant under left-Wieland gyration. In fact, if $N$ is the size of $f$, then less that $2N$ iterations of $\WL$ will suffice to obtain such \emph{stable} configurations. A key step in the proof of Theorem \ref{Thm:EventuallyStable} is to classify these stable TFPLs. It turns out that this depends solely on the occurrence of a certain type of edges called \emph{drifters}: this is the content of Theorem \ref{Thm:StableTFPL}. These results also hold for \emph{right-Wieland gyration}.

Now to each TFPL are assigned three binary words $u$, $v$ and $w$  that encode its boundary conditions. Such binary words $\sigma$ are naturally associated with Young diagrams $\lambda(\sigma)$, and by the results of \cite{Nadeau2,TFPL}, TFPLs with boundary $(u,v;w)$ such that \hbox{$|\lambda(u)|+|\lambda(v)|=|\lambda(w)|$} are enumerated by the Littlewood-Richardson-coefficient $c_{u,v}^w$. We will show that such TFPLs are stable. In general, the boundary $(u,v;w)$ of a TFPL has to satisfy $|\lambda(u)|+|\lambda(v)|\leq |\lambda(w)|$: this was proven in~\cite{Thapper} using Wieland gyration and a certain degree argument, and later reproven in a combinatorial fashion in~\cite{TFPL}. Here we will use left- and right-Wieland gyrations to give a simple proof of this inequality. \medskip

The paper is divided as follows. In Section~\ref{Section:definitions_results} we recall the definition of FPLs and TFPLs as well as elementary properties of binary words and Young diagrams. 
Section~\ref{Section:Wieland} contains the definition of our main construction, the left-Wieland gyration acting on TFPLs, based on Wieland's original definition. It is introduced in Definition~\ref{def:LeftWieland} and 
we give its first properties, culminating in Theorem~\ref{Thm:WielandBijectiveTFPL}. We can then state the theorems about stability of TFPLs, namely Theorems~\ref{Thm:EventuallyStable} and~\ref{Thm:StableTFPL}, which 
are proven in Section~\ref{Section:stable}. Finally, Section~\ref{Section:applications} contains applications of our gyration to enumerative questions concerning TFPLs.

\section{Definitions and elementary properties}
\label{Section:definitions_results}

In this section we recall the definitions of FPLs and TFPLs, and the binary words attached to the {\em boundary} of a TFPL with the necessary conditions they must satisfy.

\subsection{Fully packed loop configurations}\label{Section:FPL}
Fully packed loop configurations first came up in statistical physics; they are an alternative representation of \emph{six-vertex model} configurations which are in one-to-one correspondence with \emph{square-ice} configurations, see for example~\cite{FPL1} and~\cite{Wieland1}. Furthermore, they are in bijection 
with alternating sign matrices and other combinatorial configurations, cf.~\cite{Propp}.

 We start with the graph $G_n$, which is defined as the square grid with $n^2$ vertices together with $4n$ \emph{external edges}. 
The $(n+1)^2$ unit squares of this grid, including external cells that have two or three surrounding
edges only, are said to be the \emph{cells} of $G_n$. They are partitioned into odd and even cells in a chessboard manner where by convention the 
cells on the Northwest-Southeast diagonal are odd. In Figure~\ref{G_8} the graph $G_8$ together with its odd and even cells is depicted. 

\begin{figure}[tbh]
 \centering
 \includegraphics[width=.4\textwidth]{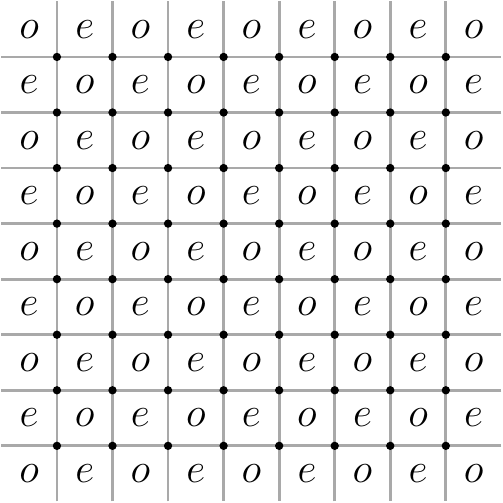}
 \caption{The graph $G_8$ with its odd and even cells.}
 \label{G_8}
\end{figure}

\begin{Def} A \emph{fully packed loop configuration} (FPL) of size $n$ is a subgraph $F$ of $G_n$ satisfying that
\begin{enumerate}
 \item each vertex of $G_n$ is incident to two edges of $F$, and
 \item precisely every other external edge belongs to $F$. 
\end{enumerate} 
\end{Def}

Given an FPL $F$ a \emph{cell of $F$} is defined as a cell of $G_n$ together with those of its surrounding edges that belong to $F$. An example of an FPL is given in Figure~\ref{ExampleFPL}. 
In a natural way, every FPL defines a non-crossing matching of the occupied external edges -- its so-called \emph{link pattern} -- by matching those which are joined by a path. 
\begin{figure}[tbh]
\centering
\includegraphics[width=.4\textwidth]{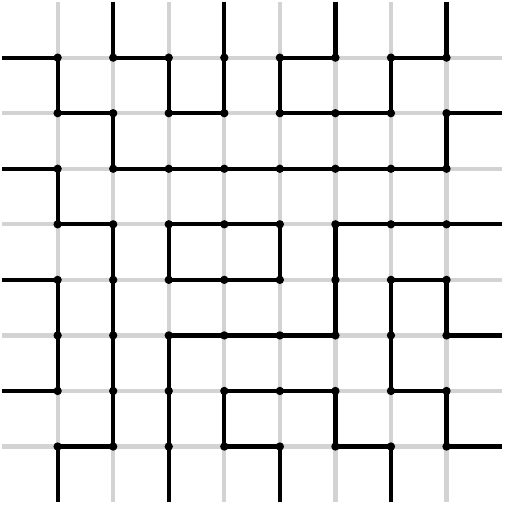}
\caption{A FPL of size 8.}
\label{ExampleFPL}
\end{figure}

In the course of the study of FPLs corresponding to fixed link patterns with a sufficiently large number of \emph{nested arches}, TFPLs first occurred:
such FPLs admit a combinatorial decomposition, in which TFPLs naturally arise. This combinatorial decomposition first came up in the course of the proof in~\cite{CKLN} of a conjecture in~\cite{Zuber} stating that if we introduce $m$ nested arches in a fixed link pattern $\pi$ then the number of FPLs corresponding to this link pattern is a polynomial 
function in $m$ as $m$ varies.

\subsection{Triangular fully packed loop configurations}
\label{subsection:TFPL}

To give the definition of triangular fully packed loop configurations, we need the following graph:
\begin{Def}[The graph $G^N$] Let $N$ be a positive integer. The graph $G^N$ is defined as the induced subgraph of the square grid made up of $N$ consecutive centered rows of 
\hbox{$3,5,\dots, 2N+1$} vertices from top to bottom together with $2N+1$ vertical \emph{external} edges incident to the $2N+1$ bottom vertices. 
\end{Def}
\begin{figure}[tbh]
\centering
\includegraphics[width=.7\textwidth]{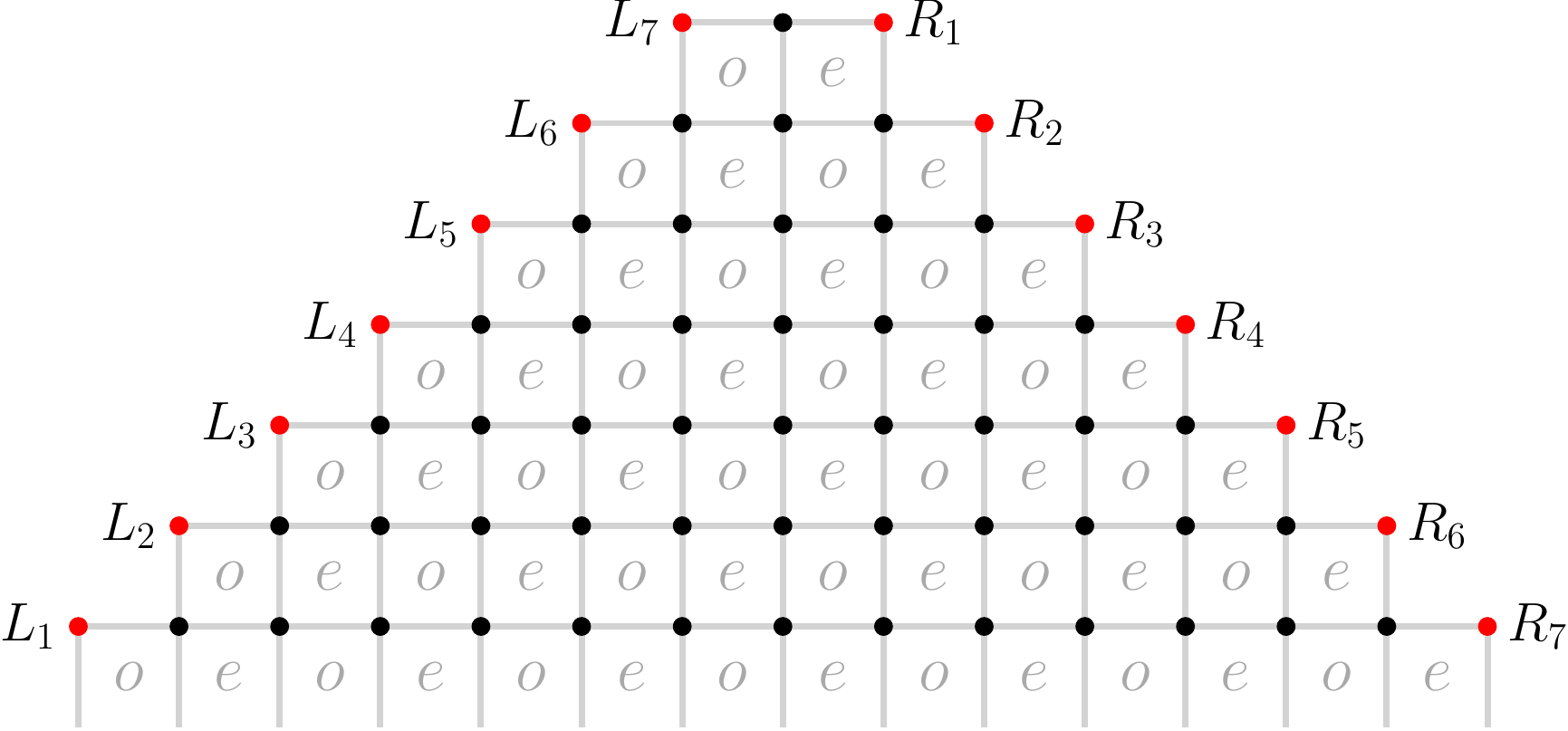}
\caption{The graph $G^7$ with its odd and even cells.}
\label{Fig003}
\end{figure}

In the following, let \hbox{$\mathcal{L}^N=\{L_1,L_2,\dots,L_N\}$} (resp. \hbox{$\mathcal{R}^N=\{R_1,R_2,\dots,R_N\}$}) be the set made up of the leftmost (resp. rightmost) vertices of the $N$ rows of $G^N$, where the vertices are numbered from left to right.
Furthermore, the $N(N+1)$ unit squares of $G^N$, including external unit squares that have three surrounding
edges only, are said to be the \emph{cells} of $G^N$. They are partitioned into \emph{odd} and \emph{even} cells in a chessboard manner where by convention the top left 
cell of $G^N$ is odd. In Figure~\ref{Fig003} the graph $G^7$ together with its odd and even cells is pictured.

\begin{Def}[\cite{TFPL}] \label{defi:tfpl}
 Let $N$ be a positive integer.
A \emph{triangular fully packed loop configuration} (TFPL) of size $N$ is a subgraph $f$ of $G^N$ such that: 
\begin{enumerate}
 \item[(i)] Every other external edge starting with the second one belongs to $f$. 
 \item[(ii)] The $2N$ vertices in $\mathcal{L}^N\cup\mathcal{R}^N$ have degree 0 or 1.
 \item[(iii)] All other vertices of $G^N$ have degree 2.
 \item[(iv)] A path in $f$ neither connects two vertices of $\mathcal{L}^N$ nor two vertices of $\mathcal{R}^N$.
\end{enumerate}
\end{Def}
\begin{figure}[tbh]
\centering
\includegraphics[width=.7\textwidth]{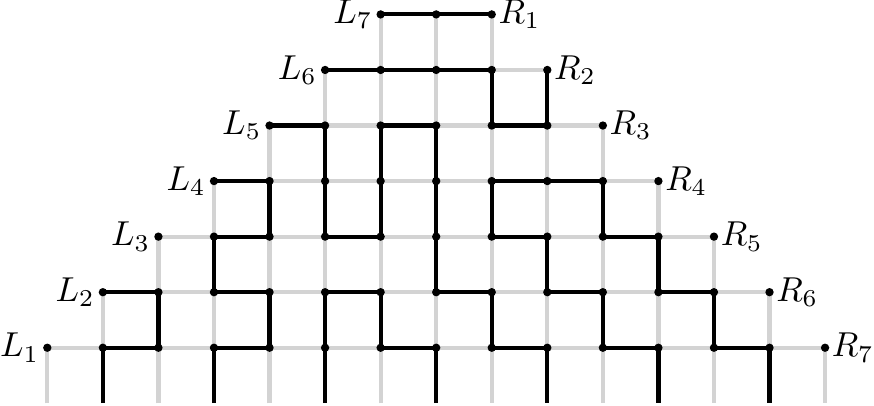}
\caption{A TFPL of size 7.}
\label{Fig002}
\end{figure}
An example of a TFPL is given in Figure~\ref{Fig002}. Similar to FPLs, a \emph{cell of $f$ }is a cell of $G^N$ together with those of its surrounding edges that belong to $f$. A cell is called \emph{interior} if it 
does not contain a vertex in $\mathcal{L}^N \cup \mathcal{R}^N$.\\

By {\em binary words} we refer to words \mbox{$\sigma=\sigma_1\cdots\sigma_N$} where $\sigma_i\in\{0,1\}$ for each $1\leq i\leq N$.
In the following, the number of occurrences of $1$ (resp. $0$) in a binary word $\sigma$ is denoted by $\vert\sigma\vert_1$ (resp. $\vert\sigma\vert_0$).
To each TFPL of size $N$ a triple of binary words of length $N$ is assigned as follows:

\begin{Def}
Let $f$ be a TFPL of size $N$. The \emph{boundary} of $f$ is a triple $(u,v;w)$ of binary words of length $N$ defined as follows: 
\begin{enumerate}
 \item $u_i=1$ if and only if $L_i\in\mathcal{L}^N$ has degree 1,
 \item $v_i=1$ if and only if $R_i\in\mathcal{R}^N$ has degree 0 and
 \item $w_i=1$ if and only if  the $i$-th external edge in $f$ -- counted from left to right -- is connected by a path in $f$ either with a vertex in $\mathcal{L}^N$ or with an external edge to 
 its left.   
\end{enumerate}
\end{Def}
The set of all TFPLs with boundary $(u,v;w)$ is denoted by $T_{u,v}^w$ and its cardinality by $t_{u,v}^w$.
For example, the triple \hbox{$(0101111,0011111;1101101)$} is the boundary of the TFPL depicted in Figure~\ref{Fig002}.
A triple $(u,v;w)$ that is the boundary of a TFPL has to fulfill certain necessary conditions. To formulate them
the following standard result is needed:

\begin{Prop}[\cite{Nadeau2}]\label{Prop005}
For given non-negative integers $k$ and $\ell$ the following two sets are in bijection:
\begin{enumerate}
 \item[(i)] the set of binary words $\sigma$ satisfying $\vert\sigma\vert_0=k$ and $\vert\sigma\vert_1=\ell$ and
 \item[(ii)] the set of Young diagrams fitting in the rectangle with $k$ rows and $\ell$ columns.
\end{enumerate}
\end{Prop}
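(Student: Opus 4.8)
The plan is to give an explicit bijection through the classical encoding of a Young diagram by the monotone lattice path along its boundary inside the ambient rectangle. Fix $k,\ell\geq 0$ and let $R$ be the rectangle with $k$ rows and $\ell$ columns of unit cells. A Young diagram $\lambda$ contained in $R$ is determined by the lattice path $P(\lambda)$ running along its lower-right boundary from the top-right corner of $R$ to the bottom-left corner, each step going either one unit west or one unit south; the cells of $\lambda$ are precisely those cells of $R$ lying weakly to the north-west of $P(\lambda)$. Such a path always uses exactly $\ell$ west-steps and $k$ south-steps, and conversely every arrangement of $\ell$ west-steps and $k$ south-steps traces the boundary of a unique Young diagram in $R$. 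This identifies the Young diagrams in $R$ with the monotone west/south paths of the prescribed type.

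First I would make this correspondence precise, treating the degenerate cases explicitly: the empty diagram corresponds to the path that first runs $\ell$ steps west and then $k$ steps south, the full rectangle $R$ to the complementary path, and a diagram with fewer than $k$ nonzero parts is accommodated by recording $k$ south-steps regardless (equivalently, by padding $\lambda$ with zero parts). Next I would translate paths into binary words: scanning $P(\lambda)$ from the top-right corner and writing a $1$ for each west-step and a $0$ for each south-step yields a word $\sigma$ with $\vert\sigma\vert_1=\ell$ and $\vert\sigma\vert_0=k$, that is, an element of set (i). The inverse map sends such a $\sigma$ to the Young diagram whose boundary path has a west-step at each position carrying a $1$ and a south-step at each position carrying a $0$; since a diagram in $R$ is recovered from its boundary path and the path is recovered step-by-step from the word, the two maps are mutually inverse.

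There is no genuine obstacle here; the only points requiring attention are fixing the conventions — the orientation in which the boundary path is read and which of west/south is coded by $0$ versus $1$ — so that the counts come out as $\vert\sigma\vert_0=k$ and $\vert\sigma\vert_1=\ell$ and not the other way around, and checking that this normalization is the one to be used later for the diagrams $\lambda(u)$, $\lambda(v)$, $\lambda(w)$ attached to the boundary words of a TFPL. Since the statement is quoted from \cite{Nadeau2}, one could alternatively just recall this standard bijection and cite that reference for the verification.
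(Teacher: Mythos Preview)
Your argument is correct and is exactly the standard boundary-path bijection; the paper itself does not give a proof but simply cites \cite{Nadeau2} and illustrates the correspondence with Figure~\ref{Fig4}, so there is nothing to compare against beyond that.

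One small normalization remark: with the specific orientation you chose (reading from the top-right corner, $1=$ west, $0=$ south), the empty diagram encodes as $1^{\ell}0^{k}$, whereas the paper's convention must send the empty diagram to $0^{k}1^{\ell}$, since it records immediately after the proposition that $|\lambda(\sigma)|$ equals the number of inversions of $\sigma$. Equivalently, the paper reads the boundary path from the bottom-left corner to the top-right corner with $0=$ north and $1=$ east. You already flagged that the convention needs to be pinned down, so this is only a matter of flipping your reading direction (or swapping the roles of $0$ and $1$) to match.
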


In Figure~\ref{Fig4}, an example for the bijection between binary words and Young diagrams is given. The Young diagram corresponding to a binary
word $\sigma$ is denoted by $\lambda(\sigma)$. Furthermore, $\lambda(\tau)\subseteq\lambda(\sigma)$ means that the Young diagram $\lambda(\tau)$ is included in the Young diagram 
$\lambda(\sigma)$, and $\vert \lambda(\sigma)\vert$ denotes the number of cells of the Young diagram $\lambda(\sigma)$. Note that $\vert \lambda(\sigma) \vert$ coincides with the number of inversions of the binary word $\sigma$.

\begin{Thm}[\cite{CKLN,Thapper,TFPL}]
In order for a TFPL configuration with boundary $(u,v;w)$ to exist, the following must be satisfied:
\begin{align}
|u|_0 = &|v|_0 = |w|_0,
\label{Necessary1}\\
\lambda(u)\subseteq\lambda(w) &\textnormal{ and }\lambda(v)\subseteq\lambda(w),
\label{Necessary2}\\
\vert \lambda(u)\vert + &\vert \lambda(v)\vert\leq\vert \lambda(w)\vert. 
\label{Necessary3}
\end{align} 
\end{Thm}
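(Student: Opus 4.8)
The plan is to establish the three conditions separately, in increasing order of difficulty, all of them from the observation that $f$ — being a subgraph of $G^N$ in which every vertex outside $\mathcal{L}^N\cup\mathcal{R}^N$ has degree $2$ while every vertex inside it has degree $0$ or $1$ — decomposes into a disjoint union of simple closed curves and simple paths. The path endpoints are exactly the degree-$1$ vertices of $\mathcal{L}^N\cup\mathcal{R}^N$ together with the $N$ occupied external edges; being drawn in the disk $G^N$ without crossings, the paths sit in a non-crossing way on the boundary, whose clockwise reading lists these $|u|_1+|v|_0+N$ potential endpoints as $L_1,\dots,L_N$, then the occupied external edges $e_1,\dots,e_N$ from left to right, then $R_N,\dots,R_1$. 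By condition~(iv) no path joins two vertices of $\mathcal{L}^N$ or two of $\mathcal{R}^N$, so every path has one of four types; writing $p,q_L,q_R,r$ for the numbers of paths joining a left vertex to a right vertex, a left vertex to an external edge, a right vertex to an external edge, and two external edges respectively, a count of endpoints on each side of the boundary gives the identities $|u|_1=p+q_L$, $|v|_0=p+q_R$, $|w|_0=q_R+r$ and $N=q_L+q_R+2r$, and a short substitution shows that \eqref{Necessary1} is equivalent to the single equality $p=r$.

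I would prove $p=r$ — equivalently \eqref{Necessary1}, which I expect to be the most elementary of the three — by a conservation argument of six-vertex type: orienting every edge of $G^N$ according to whether it lies in $f$ and to a fixed proper $2$-colouring of the vertices makes every degree-$4$ vertex of $G^N$ two-in-two-out (the only anomaly being the top vertex of $G^N$, of degree $3$, which behaves as a boundary defect), and hence produces a height function on the faces of $G^N$ that changes by $\pm 1$ across each edge and must be consistent around the outer face. Reading the prescribed height increments along the boundary — they are determined by $u$, $v$, $w$ and the fixed pattern of (non-)occupied external edges — and imposing consistency yields \eqref{Necessary1}; the staircase shape of the two slanted sides makes this bookkeeping best organised row by row. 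Alternatively one may insert $f$ into an honest FPL, as in the decomposition of~\cite{CKLN}, and invoke the FPL height function.

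For \eqref{Necessary2} I would argue with the non-crossing paths directly. By Proposition~\ref{Prop005} and the word-to-diagram dictionary, $\lambda(u)\subseteq\lambda(w)$ is equivalent to the prefix inequalities $\#\{j\le i:u_j=1\}\ge\#\{j\le i:w_j=1\}$ for $1\le i\le N$. The left-hand side counts the paths of $f$ having an endpoint among $L_1,\dots,L_i$, and the right-hand side counts the occupied external edges among $e_1,\dots,e_i$ whose path reaches a vertex of $\mathcal{L}^N$ or an external edge further to the left. Using that the paths neither cross one another nor the boundary, one shows that the path leaving each such external edge is forced either to end among $L_1,\dots,L_i$ or to produce lower down a further external edge of $\{e_1,\dots,e_i\}$ of the same kind; a short induction on $i$ then yields the inequality, and reading the picture from the right gives $\lambda(v)\subseteq\lambda(w)$. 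The point needing care is to separate the two flavours of the event ``$w_j=1$'' — reaching a left vertex versus reaching an earlier external edge — while keeping the indices matched.

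Condition~\eqref{Necessary3} is the genuinely hard one, and I expect the real obstacle to lie here: whereas \eqref{Necessary1} and \eqref{Necessary2} are ``local'' consequences of a conservation law and of a monotonicity property, the area inequality $|\lambda(u)|+|\lambda(v)|\le|\lambda(w)|$ has no purely local reason, which is exactly why its known proofs are subtle. Rather than attempting a fresh argument at this stage, I would state \eqref{Necessary3} here with a forward reference and prove it only after the required machinery is in place — namely by applying left- and right-Wieland gyration repeatedly to drive $f$ to a configuration from which the inequality can simply be read off, as done in Section~\ref{Section:applications}; the alternatives are the degree argument of~\cite{Thapper} and the combinatorial injection of~\cite{TFPL}.
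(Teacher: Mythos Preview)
Your treatment of~\eqref{Necessary3} agrees with the paper: the argument is deferred to Section~\ref{Section:applications}. Be aware that the actual proof there is not quite ``drive $f$ to a configuration from which the inequality can be read off''; it is an induction on $|\lambda(u)|$ in which one picks $u^-$ with $|\lambda(u^-)|=|\lambda(u)|-1$, shows by a periodicity argument and Theorem~\ref{Thm:WielandBijectiveTFPL} that some iterate $\WL_{u^-}^{\,i}(f)$ must have right boundary $v^+\neq v$, and then applies the induction hypothesis to that iterate. Theorems~\ref{Thm:EventuallyStable} and~\ref{Thm:StableTFPL} are not used.

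For~\eqref{Necessary1} and~\eqref{Necessary2} the paper gives no proof and simply cites the references, so your sketches go beyond what is done here. Your endpoint bookkeeping reducing~\eqref{Necessary1} to $p=r$ is correct and clean, and the height-function approach is workable: the non-four-valent vertices of $G^N$ (the top middle vertex, the $L_i$, the $R_i$, the bottom row) all lie on the outer boundary, so the height is well defined on the interior cells and the single consistency relation around the outer face is what carries the content. This is heavier than the forced-edges arguments in the cited papers, but not wrong.

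Your sketch for~\eqref{Necessary2}, however, contains a genuine error. Under the convention of Proposition~\ref{Prop005} (so that $|\lambda(\sigma)|$ equals the inversion number of $\sigma$), the inclusion $\lambda(u)\subseteq\lambda(w)$ is equivalent to
\[
\#\{j\le i:u_j=1\}\ \le\ \#\{j\le i:w_j=1\}\qquad(1\le i\le N),
\]
not $\ge$; on the running example $(u,w)=(0101111,1101101)$ the prefix $1$-counts are $(0,1,1,2,3,4,5)$ and $(1,2,2,3,4,4,5)$. Moreover, the non-crossing injection you describe for the reversed inequality does not work even on its own terms: a path from $e_j$ with $j\le i$ and $w_j=1$ to some $L_k$ is not forced to have $k\le i$ (for instance, the single chord $e_1\!-\!L_N$ is non-crossing on the boundary circle). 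The non-crossing structure alone does not seem to give~\eqref{Necessary2}; the proofs in the references use the geometry of $G^N$ via the pattern of forced edges along the left boundary, and the ``point needing care'' you flag is precisely where the purely non-crossing argument breaks down.
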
 

Conditions~\eqref{Necessary1}and~\eqref{Necessary2} are reasonably easy to prove. In Section~\ref{Section:applications}, we will provide a new proof 
of Condition~\eqref{Necessary3} using Wieland gyration on TFPLs.

\begin{figure}[tbh]
\centering
\includegraphics[width=1\textwidth]{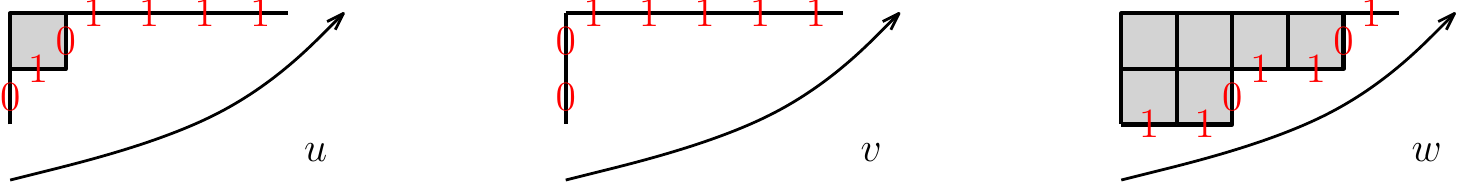}
\caption{The Young diagrams which correspond to the boundary $(0101111,0011111;1101101)$ of the TFPL in Figure~\ref{Fig002}.}
\label{Fig4}
\end{figure}

To end this section, we need certain skew shapes which play an important role in the context of left- and right-Wieland gyration. A skew shape is said to be a \emph{horizontal strip} ( \emph{resp. a vertical strip}) if each of its columns (\emph{resp.} rows) contains at most one cell.  Examples are given in Figure~\ref{Fig008}.

 Consider  two binary words $\sigma$ and $\tau$ satisfying $\vert \sigma\vert_1=\vert \tau\vert_1$ and $\vert\sigma\vert_0=\vert \tau\vert_0$. Then the skew shape \hbox{$\lambda(\tau)/\lambda(\sigma)$} is a horizontal strip (\emph{resp.} a vertical strip) if and only if for each \hbox{$j\in\{1,\dots,\vert \sigma\vert_1\}$} the following holds: 
{\it If $\sigma_i$ is the \hbox{$j$-th} one (\emph{resp.} zero) in $\sigma$, then $\tau_{i-1}$ or $\tau_i$ (\emph{resp.} $\tau_{i}$ or $\tau_{i+1}$) is the \hbox{$j$-th} one  (\emph{resp.} zero)  in $\tau$.}

\begin{figure}[tbh]
 \centering
  \includegraphics[width=.6\textwidth]{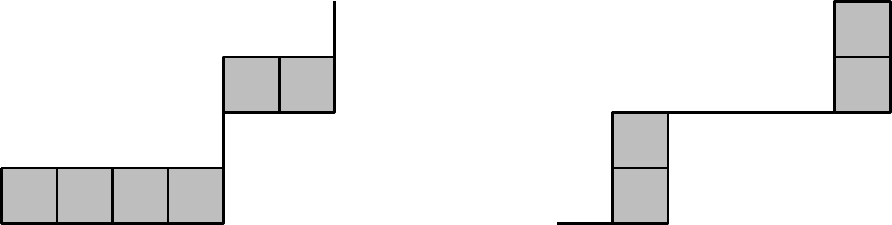}
  \caption{\label{Fig008} The horizontal strip $\lambda(1111001100) / \lambda(0111100110)$ and the vertical strip $\lambda(1100111100)/\lambda(1001111001)$.}
\end{figure}

 In the following, if the skew shaped Young diagram \hbox{$\lambda(\tau)/\lambda(\sigma)$} is a horizontal strip (\emph{resp.} a vertical strip), we will write \hbox{$\sigma\stackrel{\mathrm{h}}{\longrightarrow}\tau$} (\emph{resp.} \hbox{$\sigma\stackrel{\mathrm{v}}{\longrightarrow}\tau$}).

\section{Wieland gyration for TFPLs}
\label{Section:Wieland}

In this subsection the definitions of left- and right-Wieland gyration for TFPLs are given and some first
properties are derived. The starting point is the definition of Wieland gyration for FPLs. It is composed of local operations on all \emph{active} cells of an FPL: the active cells of an FPL can be chosen to be either all its odd cells or all its even cells. 

Now, let $F$ be an FPL and $c$ be an active cell of $F$. Then we must distinguish two cases, namely whether $c$ contains precisely two edges of $F$ on opposite sides or not. If this is the case, Wieland gyration $\Wie$ leaves $c$ invariant. Otherwise, the effect of $\Wie$ on $c$ is that edges and non-edges of $F$ are exchanged.  In Figure~\ref{Wieland}, the action of $\Wie$ on an active cell is illustrated. The result of applying $\Wie$ to each active cell of $F$ is said to be the image of $F$ under Wieland gyration and is denoted by $\Wie(F)$.

 \begin{figure}[tbh]
  \includegraphics[width=.4\textwidth]{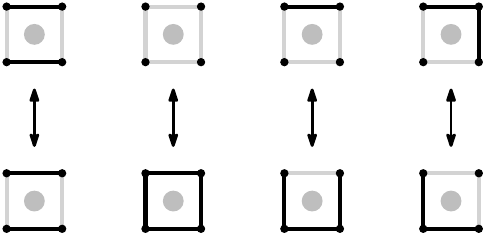}
  \caption{Up to rotation, the action of $\Wie$ on the active cells of an FPL.}
  \label{Wieland}
 \end{figure} 

In Figure~\ref{WielandFPLExample} the image of the FPL depicted in Figure~\ref{ExampleFPL} under Wieland gyration with the odd cells being active is pictured.\\
 \begin{figure}[tbh]
  \includegraphics[width=.9\textwidth]{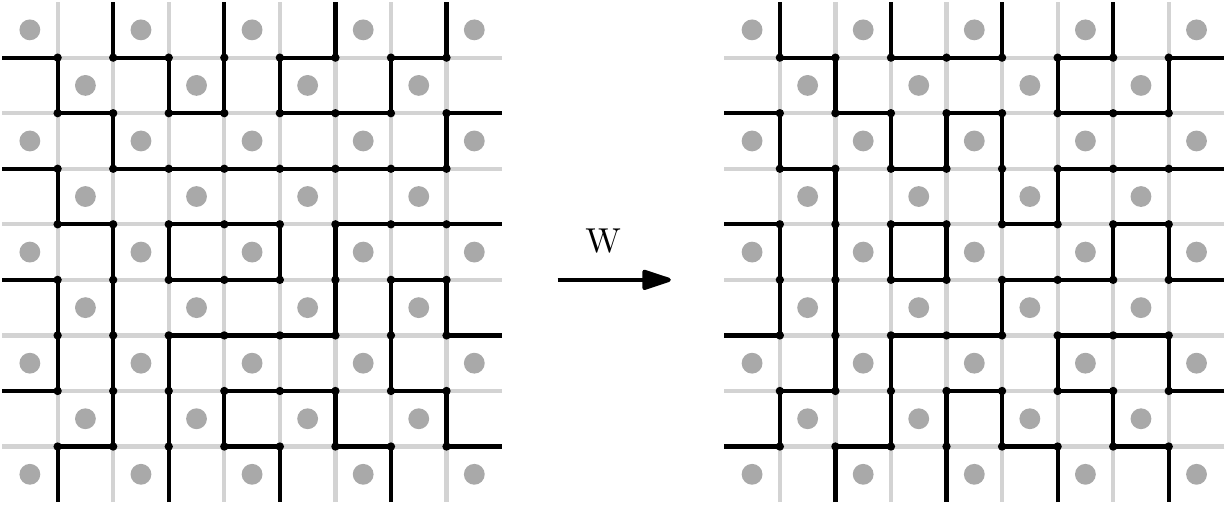}
  \caption{The image of the FPL depicted in Figure~\ref{ExampleFPL} under Wieland gyration with the odd cells being active.}
  \label{WielandFPLExample}
 \end{figure} 

Wieland gyration as it will be defined for TFPLs is based on the operation $\Wie$. As active cells of a TFPL can be chosen either all its odd cells or all its even 
cells. Choosing all odd cells as active cells will lead to what will be defined as left-Wieland gyration, whereas choosing all even cells as active cells will 
lead to what will be defined as right-Wieland gyration.

\begin{Def}[Left-Wieland gyration] 
\label{def:LeftWieland}
Let $f$ be a triangular fully packed loop configuration with left boundary word $u$, and let $u^-$ be a binary word such 
that $u^-\stackrel{\mathrm{h}}{\longrightarrow} u$. The \emph{image of $f$ under left-Wieland gyration with respect to $u^-$} is determined as follows:
\begin{enumerate}
 \item Insert a vertex $L'_i$ to the left of $L_i$ for $1\leq i\leq N$. Then run through the occurrences of ones in $u^-$: Let 
$\{i_1 < i_2 < \ldots < i_{N_1}\} = \{ i | u^-_i=1\}$.  
\begin{enumerate}
\item If $u_{i_j}$ is the $j$-th one in $u$, add a horizontal edge between $L'_{i_j}$ and $L_{i_j}$.
 \item If $u_{i_{j}-1}$ is the $j$-th one in $u$, add a vertical edge between $L'_{i_j}$ and $L_{{i_j}-1}$.
 \end{enumerate}
 \item Apply Wieland gyration to each odd cell of $f$ .
 \item Delete all vertices in $\mathcal{R}^N$ and their incident edges.
 \end{enumerate}
After shifting the whole construction one unit to the right, one obtains the desired image $\WL_{u^-}(f)$. 
\end{Def}

In the case $u^-=u$, we will simply write $\WL(f)$ and speak of the \emph{image of $f$ under left-Wieland gyration}.

\begin{figure}[tbh]
\begin{center}
\includegraphics[width=.7\textwidth]{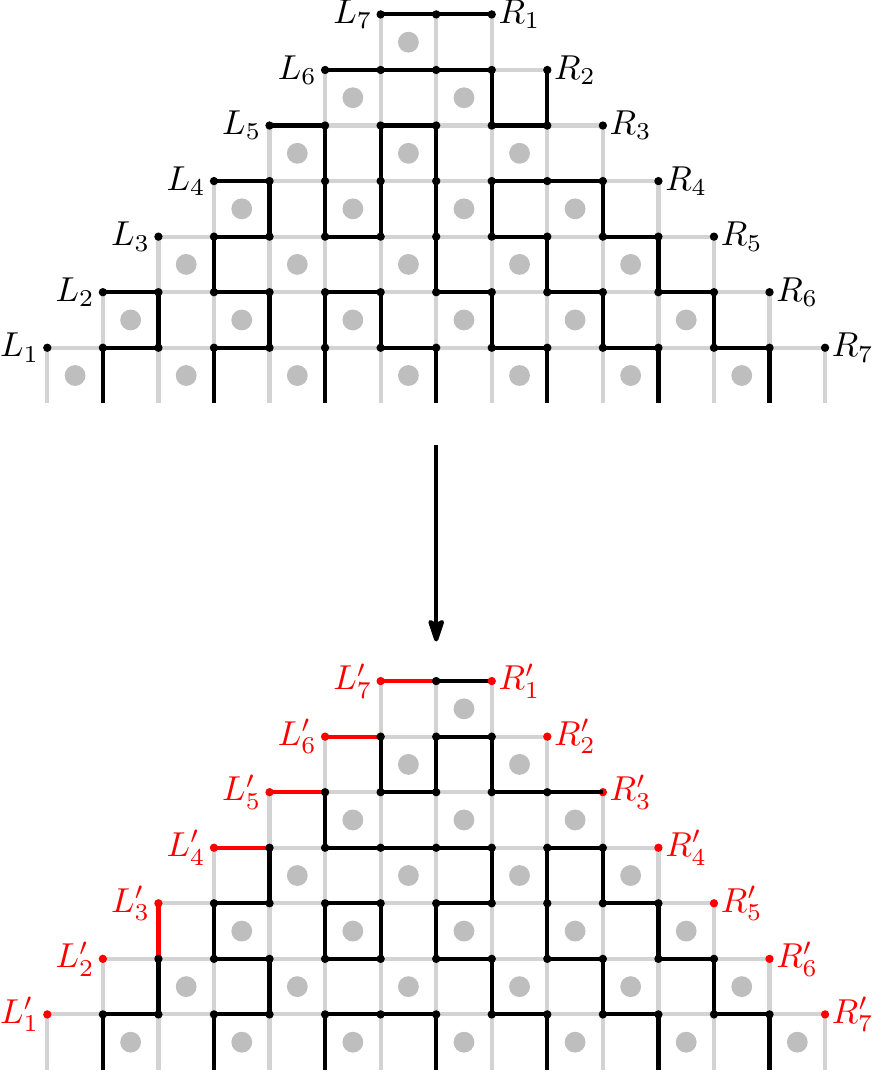}
\caption{The TFPL depicted in Figure~\ref{Fig002} and its image under left-Wieland gyration with respect to \hbox{$0011111$}.}
\label{WielandexampleTFPL}
\end{center}
\end{figure}

In the following, to distinguish between vertices in $f$ and in $\WL_{u^-}(f)$ the following notation is chosen: when regarding the image under left-Wieland gyration with respect to $u^-$, we will write $x'$ for each 
vertex $x$ of $G^N$ (before the shifting is performed).

In Figure~\ref{WielandexampleTFPL} the TFPL depicted in Figure~\ref{Fig002} with its odd cells marked by gray discs and its image under left-Wieland gyration with 
respect to \hbox{$0011111$} are pictured. It is a TFPL with boundary \hbox{$(0011111,0101111;1101101)$}. Note that the left boundary of the TFPL pictured in \hbox{Figure~\ref{Fig002}} is $0101111$ and \hbox{$0011111\stackrel{\mathrm{h}}{\longrightarrow}0101111$}. Also, the new right boundary $0101111$ and the right boundary $0011111$ of the preimage satisfy that \hbox{$0011111\stackrel{\mathrm{v}}{\longrightarrow}0101111$}. This turns out to hold in general:

\begin{Prop}\label{imageWielandTFPL}
Let $f$ be a TFPL with boundary $(u,v;w)$ and let $u^-$ be a binary word satisfying $u^-\stackrel{\mathrm{h}}{\longrightarrow}u$. 
Then $\WL_{u^-}(f)$ is a TFPL with boundary $(u^-,v^+;w)$ where $v^+$ is a binary word satisfying  $v\stackrel{\mathrm{v}}{\longrightarrow}v^+$.
\end{Prop}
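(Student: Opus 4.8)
The plan is to verify the three defining properties of a TFPL for $\WL_{u^-}(f)$ and to track carefully what happens to the boundary words. Recall that the construction of $\WL_{u^-}(f)$ has three stages: (1) attaching new leftmost vertices $L'_i$ together with the edges dictated by the horizontal strip $\lambda(u)/\lambda(u^-)$, (2) applying the cell operation $\Wie$ to every odd cell, and (3) deleting the old rightmost column $\mathcal{R}^N$ and its incident edges, after which everything is shifted one unit right. After the shift, the new leftmost vertices $L'_i$ play the role of $\mathcal{L}^N$ and the old vertices $L_i$ (second-from-left before the shift) become the interior left neighbours, while the former second-from-right vertices become the new $\mathcal{R}^N$.

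First I would check the degree conditions. The cell operation $\Wie$ preserves the degree of every vertex that is interior to the union of the odd cells acted on; the only vertices whose degree can change are those lying on the ``boundary'' of this region, namely the vertices in $\mathcal{L}^N$, in $\mathcal{R}^N$, and the bottom (external) vertices. The external-edge condition (i) for $\WL_{u^-}(f)$ reduces to checking that the odd/even colouring of the bottom cells is exactly reversed under the shift — this is where the shift-by-one is essential — combined with the local description of $\Wie$ on the bottom cells; since in $f$ every second external edge starting with the second is present, after applying $\Wie$ to the bottom row and shifting, every second external edge again starting with the second is present. For condition (iii), I would argue that each old vertex $L_i$ had degree $2$ in $f$ after we added the new edge(s) at $L'_i$ in step (1) — here one uses that $u^-\stackrel{\mathrm h}{\longrightarrow}u$ guarantees exactly one new edge is attached so that $\Wie$ applied to the odd cell containing $L_i$ restores degree $2$ — and symmetrically that the former second-from-right vertices end up with degree $2$. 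The vertices $L'_i$ and the new $\mathcal{R}^N$ get degree $0$ or $1$, giving (ii); condition (iv) (no path joining two left, or two right, vertices) follows because $\Wie$ only locally rearranges edges and cannot create a new connection between two degree-$\le 1$ vertices on the same side that was not already forced.

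Next I would identify the boundary words. That the top word is $u^-$ is immediate from step (1): $L'_i$ has degree $1$ exactly when $u^-_i=1$, by the case distinction (a)/(b) in the definition. That the bottom word $w$ is unchanged is the heart of Wieland's original argument: the sequence of which external edges are present and how they are connected ``across the bottom'' is governed by the link-pattern-type behaviour of $\Wie$, and one shows directly from the local moves that the connectivity pattern recorded by $w$ is invariant (this is exactly the TFPL analogue of Wieland's lemma that gyration rotates the link pattern, specialised so that no rotation occurs here because of the half-infinite nature of the strip). For the right word, before deletion the vertices $R_i$ are removed in step (3), so the new right boundary is read off the column of former second-from-right vertices; I would show that whether such a vertex has degree $0$ in $\WL_{u^-}(f)$ is determined by $v$ together with the local action of $\Wie$ on the rightmost odd cells, and that the resulting word $v^+$ satisfies $\lambda(v^+)/\lambda(v)$ is a vertical strip, i.e. $v\stackrel{\mathrm v}{\longrightarrow}v^+$, using the characterisation of vertical strips in terms of binary words recalled just before Section~\ref{Section:Wieland}: a zero of $v$ at position $i$ moves to position $i$ or $i+1$ in $v^+$, which is precisely what the local $\Wie$-moves on that column allow.

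The main obstacle, I expect, is the bookkeeping around the right-hand column: unlike the left side, where step (1) gives us full control by hand, the new right boundary is produced indirectly by $\Wie$ acting near $\mathcal{R}^N$ followed by deletion and shift, and one must check simultaneously that (a) the former second-from-right vertices acquire degree $2$, (b) the new rightmost vertices get degree $0$ or $1$, and (c) the resulting word $v^+$ relates to $v$ by a vertical strip. A clean way to handle this is to run the deletion of $\mathcal{R}^N$ \emph{before} applying $\Wie$ in the analysis — observing that $\Wie$ on an odd cell adjacent to a just-deleted vertex behaves like the ``insertion'' step on a transposed/reflected picture — so that the right-hand analysis becomes formally the mirror image of the left-hand analysis, with horizontal strips replaced by vertical strips. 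The external-edge colouring and the shift also need a short but careful argument to make sure parities line up; I would isolate this as a preliminary observation about the cells of $G^N$ so it does not clutter the main verification.
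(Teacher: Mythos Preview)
Your outline follows essentially the same route as the paper, but there are two places where the argument as written is not yet a proof.

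First, the degree bound on the new right boundary. You assert that ``the vertices $L'_i$ and the new $\mathcal{R}^N$ get degree $0$ or $1$'' and later propose to recover this by a mirror-image analysis. But the situation on the right is \emph{not} symmetric to the left: on the left you control the added edges by hand via $u^-$, while on the right you delete $\mathcal{R}^N$ after $\Wie$ and must show that no former second-from-right vertex ends up with degree $2$. The paper's argument is short and should be made explicit: if such a vertex $R'_i$ had degree $2$ in $\WL_{u^-}(f)$, then, tracing back through $\Wie$ on the adjacent odd cell, the vertex just left of $R_i$ in $f$ would be adjacent to both $R_{i-1}$ and $R_i$, producing a path joining two vertices of $\mathcal{R}^N$ and contradicting condition~(iv) for $f$. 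This is precisely where the TFPL hypothesis on $f$ (and not merely the degree constraints) is used on the right side. Incidentally, your sentence ``symmetrically that the former second-from-right vertices end up with degree~$2$'' says the opposite of what you need; those vertices are the new $\mathcal{R}^N$ and must have degree at most~$1$.

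Second, condition~(iv) for $\WL_{u^-}(f)$. Saying that $\Wie$ ``cannot create a new connection between two degree-$\le 1$ vertices on the same side that was not already forced'' is not an argument. The fact you need---and which you do invoke correctly when arguing that the bottom word $w$ is preserved---is that Wieland gyration on a cell preserves the connectivity of the path endpoints through that cell. Applied globally to the intermediate configuration $f'$ (before deleting $\mathcal{R}^N$), this gives that no path in $\WL_{u^-}(f)$ joins two vertices of $\mathcal{L}^{N\prime}$ or two of $\mathcal{R}^{N\prime}$, since no such path existed in~$f$. State this once as a lemma about $\Wie$ and use it in both places; then the discussion of external-edge parities and shifts can be shortened considerably.
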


\begin{proof} 
First, we have to check that $\WL_{u^-}(f)$ indeed is a TFPL, i.e. the four conditions in Definition~\ref{defi:tfpl} must be satisfied. By definition, the vertices $L'_1,L'_2,\dots,L'_N$ have degree $0$ or $1$. For the degree of $R_i'$ to be $2$
in $\WL_{u^-}(f)$, the vertex to the left of $R_i$ would need to be adjacent both to $R_{i-1}$ and $R_i$ in $f$, which is excluded since no path in $f$ joins two vertices 
in $\mathcal{R}^N$ by Definition~\ref{defi:tfpl}(iv).
Thus, the vertices $R_1',R_2',\dots,R_N'$ have degree $0$ or $1$ in $\WL_{u^-}(f)$. All other vertices have degree $2$ in $\WL_{u^-}(f)$ since they simply come from the application of $\Wie$ to cells of $f$. Finally 
let $f'$ denote the configuration that is obtained before the vertices of $\mathcal{R}^N$ are deleted.
Since Wieland gyration preserves the connectivity of path endpoints in each active cell, this is also true in $f'$. Thus, a path in $\WL_{u^-}(f)$ neither joins two vertices in $\mathcal{L}^{N\prime}$ nor two vertices 
in $\mathcal{R}^{N\prime}$ and Definition~\ref{defi:tfpl}(iv) is satisfied.

It remains to check the assertion on the boundary.  The left boundary of $\WL_{u^-}(f)$ is $u^-$ by construction. The right boundary $v^+$ of $\WL_{u^-}(f)$ satisfies  $v\stackrel{\mathrm{v}}{\longrightarrow}v^+$ by 
Proposition \ref{PropRightBoundary} below  and the characterization of pairs $\sigma,\sigma^+$ of binary words satisfying $\sigma\stackrel{\mathrm{v}}{\longrightarrow}\sigma^+$ at the end of Section~\ref{Section:definitions_results}. Finally, the bottom boundary of $\WL_{u^-}(f)$ is $w$ because Wieland gyration preserves the connectivity of path endpoints in each active cell. 
\end{proof}
 
The lemma below treats the effects of left-Wieland gyration along the right boundary of a TFPL. 

\begin{Lemma}\label{WielandRightBoundaryTFPL} Let $f,u^-,v^+$ be as in Proposition~\ref{imageWielandTFPL}. Then  $v^+\neq v$ if and only if there exists a vertex in $\mathcal{R}^N$ which is incident to a vertical edge of $f$.
\end{Lemma}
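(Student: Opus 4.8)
The plan is to track what happens along the right boundary when left-Wieland gyration is applied, by examining the effect of step (2) (the application of $\Wie$ to odd cells) on the odd cells touching $\mathcal{R}^N$, and then step (3) (the deletion of $\mathcal{R}^N$). The right boundary word $v$ of $f$ records, via $v_i=1$, that $R_i$ has degree $0$, i.e.\ $R_i$ is \emph{not} incident to an edge of $f$; equivalently $v_i=0$ means $R_i$ has degree $1$, so it is incident to exactly one edge, which is either the horizontal edge $R_{i-1}R_i$ (using the vertex to its left) or the vertical edge joining $R_i$ to the vertex below/above it along the right slanted side. The claim is that $v^+\neq v$ precisely when at least one such incident edge is vertical rather than horizontal.

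First I would set up the local picture: $v^+$ is the right boundary of $\WL_{u^-}(f)$, whose rightmost vertices are $R_1',\dots,R_N'$, where $R_i'$ is the vertex immediately to the left of $R_i$ in $G^N$ (before the final shift). By Proposition~\ref{imageWielandTFPL} we already know $v\stackrel{\mathrm{v}}{\longrightarrow}v^+$, so $\lambda(v^+)/\lambda(v)$ is a vertical strip; hence $v^+=v$ iff this vertical strip is empty, i.e.\ iff no cell is added. Using the characterization of $\sigma\stackrel{\mathrm{v}}{\longrightarrow}\sigma^+$ recalled at the end of Section~\ref{Section:definitions_results}, $v^+=v$ is equivalent to: for every $j$, if $v_i$ is the $j$-th zero in $v$ then $v^+_i$ (and not $v^+_{i+1}$) is the $j$-th zero in $v^+$ — in other words the zeros never move. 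I would then argue that a zero of $v$ at position $i$ moves to position $i+1$ in $v^+$ exactly when $R_i$ was connected by a vertical edge: the degree-$1$ vertex $R_i$ whose unique edge is vertical becomes, after $\Wie$ is applied to the relevant odd cell and $\mathcal{R}^N$ is removed, a situation where $R_i'$ (sitting one step to the left) inherits degree $0$ while the zero "shifts". The cleanest way to see this is to invoke Proposition~\ref{PropRightBoundary} (referenced in the proof of Proposition~\ref{imageWielandTFPL}), which presumably gives the precise rule for how $v^+$ is read off; alternatively one does the small finite case-check on the two or three possible configurations of an odd cell adjacent to $R_i$, using that $\Wie$ fixes a cell with two opposite edges and otherwise swaps edges and non-edges.

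The core computation is therefore a local, finite case analysis of the odd cells that contain a vertex of $\mathcal{R}^N$. For each row $i$ one lists the possibilities for the edges at $R_i$ (degree $0$; degree $1$ via the horizontal edge to $R_i'$; degree $1$ via a vertical edge), applies $\Wie$ to the odd cell to the left of $R_i$, deletes $R_i$, and reads off whether the $i$-th letter of $v^+$ differs from $v_i$ — more precisely, whether the zero-count up to position $i$ has changed. One shows: a horizontal incident edge at $R_i$ contributes nothing new to the vertical strip (the zeros to the left are unchanged), whereas a vertical incident edge at some $R_i$ forces the addition of a box, hence $\lambda(v^+)\supsetneq\lambda(v)$ and $v^+\neq v$. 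Conversely if no $R_i$ has a vertical incident edge, every odd cell along the right side falls into the "two opposite edges" case or the horizontal case, no box is added, and $v^+=v$.

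The main obstacle I anticipate is bookkeeping the indexing carefully: the insertion of the primed vertices $L_i'$, the one-unit shift at the end of Definition~\ref{def:LeftWieland}, and the fact that $\Wie$ acts on \emph{odd} cells means one must be sure which cells along the right edge are odd (this depends on the parity convention "top left cell is odd") and hence actually get modified. Getting the orientation conventions right — which side of the triangle is "the right boundary", and that a vertical edge at $R_i$ goes to the vertex in an adjacent row rather than within row $i$ — is where an error could creep in. Once the local rule is pinned down (ideally by citing Proposition~\ref{PropRightBoundary}), the equivalence is immediate: $v^+\neq v$ $\iff$ the vertical strip $\lambda(v^+)/\lambda(v)$ is nonempty $\iff$ some $R_i$ carries a vertical edge of $f$.
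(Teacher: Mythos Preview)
Your plan is essentially the paper's own argument: a local case analysis of the odd cells touching $\mathcal{R}^N$, comparing the degree of $R_j$ in $f$ with that of $R_j'$ in $\WL_{u^-}(f)$. Two remarks are in order. First, you cannot invoke Proposition~\ref{PropRightBoundary} here: in the paper that proposition is explicitly obtained \emph{as a byproduct} of the proof of the present lemma, so citing it would be circular; you must carry out the finite case-check directly, as you propose in your alternative. Second, the paper's execution is more streamlined than your vertical-strip framing: rather than tracking whether ``zeros move'' or whether the strip $\lambda(v^+)/\lambda(v)$ is nonempty, it simply picks (for one direction) the minimal $j$ with $R_j$ incident to a vertical edge, observes that then $x_j$ (the vertex left of $R_j$) must be adjacent to both its left and bottom neighbours, so after $\Wie$ the vertex $R_j'$ has degree $0$, giving $v^+_j=1\neq 0=v_j$; for the converse it picks $j$ with $v_j=0$, $v^+_j=1$ (such $j$ exists since $v\stackrel{\mathrm{v}}{\longrightarrow}v^+$ and $v^+\neq v$) and reverses the reasoning. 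Also, a small geometric slip: the horizontal edge at $R_i$ is not ``$R_{i-1}R_i$'' but the edge from $R_i$ to the vertex $x_i$ immediately to its left, and the vertical edge at $R_i$ goes down to the vertex below (which is $x_{i+1}$), not along a ``slanted side''.
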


\begin{proof}
We denote by $x_s$ the vertex to the left of $R_s$ for all $1\leq s\leq N$, and write $f'$ to denote $\WL_{u^-}(f)$.

Let $f$ be a TFPL with a vertex $R_j$ incident to a vertical edge, and pick $j$ minimal. Then $x_j$ is necessarily adjacent both to the vertex to its left and to the vertex below, so by Wieland gyration  $R'_j$ is of 
degree $0$ in $f'$. Since $R_j$ is of degree $1$ this shows $v\neq v^+$.

Conversely, suppose that $v^+\neq v$. By Proposition~\ref{imageWielandTFPL} there
exists necessarily $j\in\{1,2,\dots,N-1\}$ such that $v_{j}=0$ and $v^+_j=1$.
$R_j'$ is of degree $0$ in $f'$, so $x_j$ is adjacent
 in $f$ both to the vertex to its left and to the vertex below it. Since $R_j$ is of degree $1$,  it is necessarily incident to a vertical edge.
\end{proof}
As a byproduct of the previous proof, one can in fact precisely describe the right boundary $v^+$ as follows:
\begin{Prop}\label{PropRightBoundary} 
 Conserve the hypotheses of Lemma~\ref{WielandRightBoundaryTFPL}. For each $i$ such that $R_i$ is adjacent to a horizontal edge (\emph{resp.} a vertical edge) then $v^+_i=0$ (\emph{resp.} $v^+_{i+1}=0$). All other values $v^+_j$'s are equal to $1$.
\end{Prop}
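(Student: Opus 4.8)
The plan is to push a little further the local analysis near the right boundary that already appears in the proof of Lemma~\ref{WielandRightBoundaryTFPL}; essentially no new ingredient is needed. I keep that proof's notation: $x_s$ denotes the vertex of $G^N$ immediately to the left of $R_s$ (for $1\le s\le N$), and $f'=\WL_{u^-}(f)$. Before the concluding shift in Definition~\ref{def:LeftWieland}, $x_s$ is the vertex that becomes $R'_s$ in $f'$, so that $v^+_s=1$ exactly when $\deg_{f'}(R'_s)=0$, a degree which is $0$ or $1$ by Proposition~\ref{imageWielandTFPL}. I also record two trivial incidences in $G^N$: the only horizontal edge that $R_s$ could be incident to is $x_sR_s$, and (for $s\ge2$) the only edge that $R_{s-1}$ could be incident to that points downwards is $R_{s-1}x_s$; hence ``$R_s$ is incident to a horizontal edge of $f$'' is equivalent to $x_sR_s\in f$, and ``$R_{s-1}$ is incident to a vertical edge of $f$'' is equivalent to $R_{s-1}x_s\in f$. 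Finally, $R_N$ can only be incident to a horizontal edge, since the external edge below it is the $(2N+1)$-th, hence not in $f$.

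First I would localize the computation of $\deg_{f'}(R'_s)$. The neighbours of $x_s$ in $G^N$ are: its left neighbour, its lower neighbour, $R_s$ (to the right), and, for $s\ge2$, $R_{s-1}$ (above); of these only $R_s$ and $R_{s-1}$ lie in $\mathcal{R}^N$, and these are the vertices (together with their incident edges) deleted in Step~(3) of the gyration. Consequently the edges of $f'$ at $R'_s$ are exactly the edges incident to $x_s$ \emph{after} the Wieland gyration of Step~(2) that go towards its left or its lower neighbour. The crucial point is then the equivalence, implicit in the proof of Lemma~\ref{WielandRightBoundaryTFPL}, that $\deg_{f'}(R'_s)=0$ if and only if in $f$ the vertex $x_s$ is incident both to its left-neighbour edge and to its lower-neighbour edge; I would establish it exactly as there, by inspecting the action of $\Wie$ on the two cells of $G^N$ bordering those two edges. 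Since $x_s$ has degree $2$ in $f$ and its remaining possible neighbours are $R_s$ and (for $s\ge2$) $R_{s-1}$, this condition is in turn equivalent to: $x_s$ is incident in $f$ to neither $R_s$ nor $R_{s-1}$.

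Combining these then gives the statement: $v^+_s=0$ iff $\deg_{f'}(R'_s)=1$ iff $x_s$ is incident in $f$ to $R_s$ or to $R_{s-1}$ iff, using the first paragraph, $R_s$ is incident to a horizontal edge of $f$ or $R_{s-1}$ is incident to a vertical edge of $f$; by Definition~\ref{defi:tfpl}(iv) these two alternatives never occur simultaneously. Re-indexing, this is exactly the assertion that $v^+_i=0$ whenever $R_i$ is incident to a horizontal edge, that $v^+_{i+1}=0$ whenever $R_i$ is incident to a vertical edge (then $i\le N-1$ by the last remark of the first paragraph, so $v^+_{i+1}$ makes sense), and that every other entry of $v^+$ equals $1$. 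The only non-formal step is the local equivalence in the second paragraph: it depends on knowing which of the two right-boundary cells adjacent to $x_s$ are active and on tracking $\Wie$ on them precisely, just as in the proof of Lemma~\ref{WielandRightBoundaryTFPL}. I expect that to be the only real obstacle; everything else is bookkeeping.
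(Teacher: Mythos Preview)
Your proposal is correct and follows essentially the same route as the paper. The paper does not give a separate proof of this proposition but presents it explicitly as ``a byproduct of the previous proof'' of Lemma~\ref{WielandRightBoundaryTFPL}; your write-up is precisely the natural elaboration of that byproduct, isolating the equivalence $\deg_{f'}(R'_s)=0 \Leftrightarrow x_s$ is adjacent in $f$ to both its left and lower neighbour (both implications of which indeed appear in the two halves of the paper's proof of Lemma~\ref{WielandRightBoundaryTFPL}), and then translating via degree~$2$ at $x_s$. One small imprecision: the relevant active cell at $x_s$ is a single odd cell (the one with $x_s$ as its top-right vertex), not two, since the edges $x_sR_s$ and $R_{s-1}x_s$ lie only in even cells and are therefore untouched by Step~(2); this is also why your citation of Proposition~\ref{imageWielandTFPL} for $\deg_{f'}(R'_s)\in\{0,1\}$ is not circular, as that part of its proof is established independently of the present proposition.
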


{\noindent \bf Right-Wieland gyration.} In the definition of left-Wieland gyration, the active cells are all odd cells of a TFPL. When selecting all even cells of a TFPL as active cells, \emph{right-Wieland gyration} 
is obtained. It depends on a binary word $v^-$ satisfying $v^-\stackrel{\mathrm{v}}{\longrightarrow}v$ that encodes what happens along the right boundary of a TFPL with right boundary $v$ and is denoted by $\WR_{v^-}$ 
repsectively $\WR$ if $v^-=v$. It is defined in an obvious way as the symmetric version of left gyration, and we shall simply illustrate it on an example in Figure~\ref{InverseWielandExampleTFPL}. 

\begin{figure}[tbh]
\includegraphics[width=.7\textwidth]{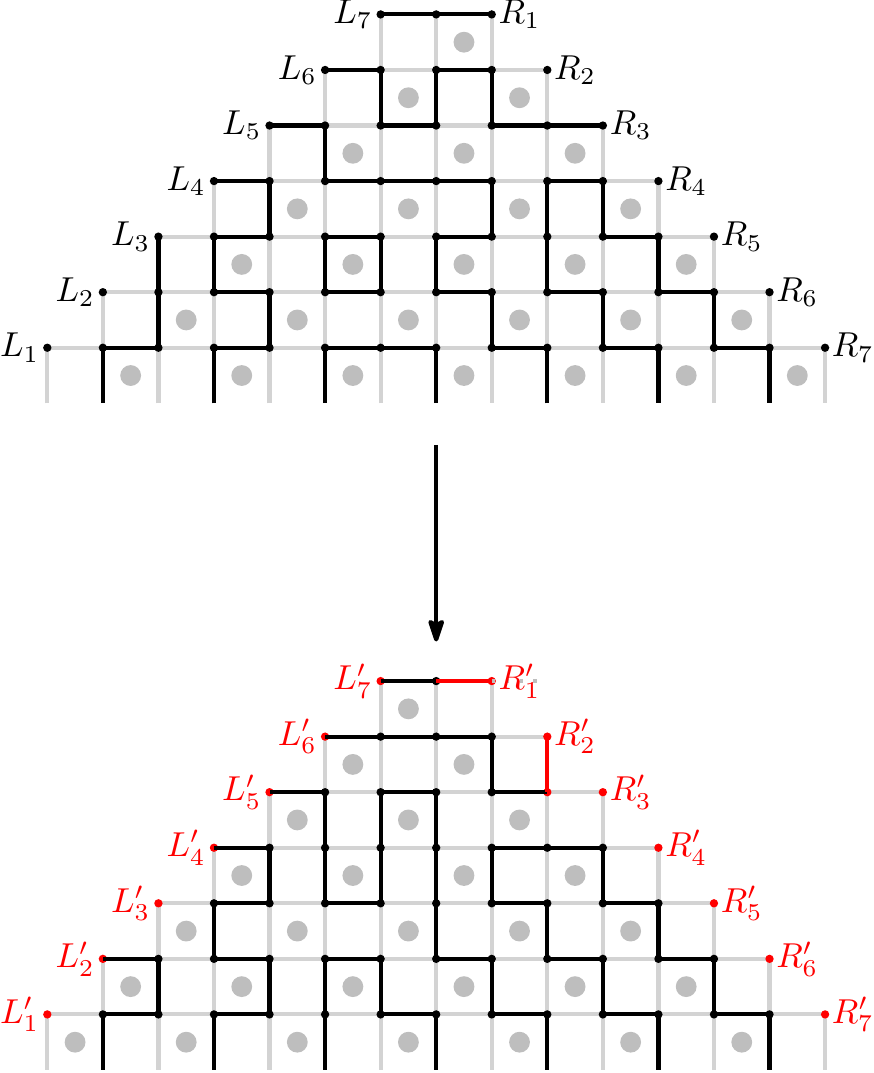}
\caption{A TFPL  and its image under right-Wieland gyration with respect to \hbox{$0011111$}.}
\label{InverseWielandExampleTFPL}
\end{figure}

There are immediate symmetrical versions of Propositions~\ref{imageWielandTFPL} and~\ref{PropRightBoundary} for $\WR$ which we record:

\begin{Prop}\label{ImageRightWieland}
The image of a TFPL with boundary $(u,v;w)$ under right-Wieland gyration with respect to $v^-$ is a TFPL with boundary $(u^+,v^-;w)$ where $u^+$ is a binary word satisfying $u\stackrel{\mathrm{h}}{\longrightarrow}u^+$. 
\end{Prop}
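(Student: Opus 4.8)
The plan is to obtain Proposition~\ref{ImageRightWieland} as a formal consequence of the already-established theory for left-Wieland gyration, rather than redoing the cell-by-cell analysis. The key observation is that right-Wieland gyration is, by definition, the mirror image of left-Wieland gyration under the reflection of $G^N$ through a vertical axis: this reflection swaps odd and even cells (so it interchanges the two choices of active cells), it swaps $\mathcal{L}^N$ and $\mathcal{R}^N$, it reverses each binary word, and on Young diagrams word-reversal corresponds to the conjugate (transpose) operation, which exchanges horizontal strips with vertical strips. Concretely, if $\rho$ denotes reversal of a binary word, then for a TFPL $f$ with boundary $(u,v;w)$ the mirror image $f^\ast$ is a TFPL with boundary $(\rho(v),\rho(u);\rho(w))$, and $\WR_{v^-}(f) = (\WL_{\rho(v^-)}(f^\ast))^\ast$.

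First I would make the mirror-symmetry statement precise: define the vertical reflection on $G^N$, check that it carries TFPLs to TFPLs (all four conditions of Definition~\ref{defi:tfpl} are symmetric), and verify that it sends the boundary $(u,v;w)$ to $(\rho(v),\rho(u);\rho(w))$ — the swap of $u$ and $v$ coming from the interchange of the leftmost and rightmost vertices of each row, and the conditions on degrees translating verbatim. Next I would record that $\sigma \stackrel{\mathrm{h}}{\longrightarrow} \tau$ if and only if $\rho(\sigma) \stackrel{\mathrm{v}}{\longrightarrow} \rho(\tau)$; this is immediate from the explicit characterization of horizontal and vertical strips in terms of positions of the $j$-th one (resp. zero) given at the end of Section~\ref{Section:definitions_results}, since reversing a word of length $N$ sends the $j$-th one in position $i$ to the $(\vert\sigma\vert_1 - j + 1)$-th one in position $N+1-i$ and turns the condition "$\tau_{i-1}$ or $\tau_i$" into "$\tau_i$ or $\tau_{i+1}$" after relabeling. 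Finally, since right-Wieland gyration with even cells active is by construction exactly the conjugation-by-reflection of left-Wieland gyration with odd cells active, applying Proposition~\ref{imageWielandTFPL} to $f^\ast$ with respect to $\rho(v^-)$ yields that $\WL_{\rho(v^-)}(f^\ast)$ has boundary $(\rho(v^-), \tilde{v}; \rho(w))$ with $\rho(u) \stackrel{\mathrm{v}}{\longrightarrow} \tilde{v}$; reflecting back and writing $u^+ = \rho(\tilde{v})$ gives that $\WR_{v^-}(f)$ has boundary $(u^+, v^-; w)$ with $u \stackrel{\mathrm{h}}{\longrightarrow} u^+$, as desired.

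The main obstacle I anticipate is purely bookkeeping: making sure that the one-unit shift in Definition~\ref{def:LeftWieland} and the insertion/deletion of the $L'_i$ and $R_i$ vertices are correctly mirrored, so that the reflection genuinely conjugates $\WL$ into $\WR$ on the nose rather than up to some unexamined discrepancy. Once the reflection $f \mapsto f^\ast$ is set up carefully on the level of the ambient graph $G^N$ together with its added primed vertices, everything else is a mechanical transport of Propositions~\ref{imageWielandTFPL} and~\ref{PropRightBoundary}. In fact the same argument simultaneously yields the mirror of Proposition~\ref{PropRightBoundary}: for each $i$ with $L_i$ adjacent to a horizontal edge one gets $u^+_i = 0$, for each $i$ with $L_i$ adjacent to a vertical edge one gets $u^+_{i-1} = 0$, and all remaining entries of $u^+$ equal $1$. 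Since the excerpt only asks us to record Proposition~\ref{ImageRightWieland}, I would state its proof as the single sentence that it follows from Proposition~\ref{imageWielandTFPL} by the symmetry exchanging odd and even cells together with the correspondence $\sigma \stackrel{\mathrm{h}}{\longrightarrow} \tau \iff \rho(\sigma)\stackrel{\mathrm{v}}{\longrightarrow}\rho(\tau)$.
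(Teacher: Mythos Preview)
Your overall strategy---reducing right-Wieland gyration to left-Wieland gyration via the vertical reflection of $G^N$---is precisely the formalization of the paper's one-line justification (``immediate symmetrical versions''), and is the right way to proceed. However, your bookkeeping on binary words is off. The vertical reflection does \emph{not} send the boundary $(u,v;w)$ to $(\rho(v),\rho(u);\rho(w))$: since $u_i=1$ records that $L_i$ has degree~$1$ while $v_i=1$ records that $R_i$ has degree~$0$ (and similarly the definition of $w$ is asymmetric in left versus right), the reflection introduces a complementation in addition to the reversal, so the boundary of $f^\ast$ is $\bigl(\overline{\rho(v)},\overline{\rho(u)};\overline{\rho(w)}\bigr)$. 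Correspondingly, the correct equivalence on strips is
\[
\sigma\stackrel{\mathrm{h}}{\longrightarrow}\tau \quad\Longleftrightarrow\quad \overline{\rho(\sigma)}\stackrel{\mathrm{v}}{\longrightarrow}\overline{\rho(\tau)},
\]
not the version with $\rho$ alone. Your own derivation reveals the slip: reversal sends the $j$-th \emph{one} to a one at a shifted position, yielding a condition of the form ``$\tau_{i}$ or $\tau_{i+1}$'' on \emph{ones}, whereas the paper's vertical-strip criterion is phrased in terms of \emph{zeros}; the complementation is exactly what converts ones into zeros. A concrete counterexample to your stated equivalence is $\sigma=01$, $\tau=10$: here $01\stackrel{\mathrm{h}}{\longrightarrow}10$ holds, but $\rho(\sigma)=10$, $\rho(\tau)=01$, and since $\lambda(01)=\emptyset\not\supseteq\lambda(10)$ the relation $10\stackrel{\mathrm{v}}{\longrightarrow}01$ fails.

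This error propagates to your mirrored version of Proposition~\ref{PropRightBoundary}: you predict $u^+_i=0$ when $L_i$ is adjacent to a horizontal edge and all remaining entries equal to $1$, whereas the paper's Proposition~\ref{PropLeftBoundary} has $u^+_i=1$ in that case and remaining entries equal to $0$. Once you replace $\rho$ by the involution $\sigma\mapsto\overline{\rho(\sigma)}$ throughout, your argument goes through verbatim and yields both Proposition~\ref{ImageRightWieland} and Proposition~\ref{PropLeftBoundary} exactly as stated.
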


\begin{Prop}\label{PropLeftBoundary} Keep the notations of the previous proposition. 
For each index $i$ such that $L_i$ is adjacent to a horizontal edge (\emph{resp.} a vertical edge), there holds $u^+_i=1$ (\emph{resp.} $u^+_{i-1}=1$). All other values $u^+_j$'s are equal to $0$.
\end{Prop}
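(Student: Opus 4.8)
The plan is to deduce Proposition~\ref{PropLeftBoundary} from Proposition~\ref{PropRightBoundary} by the same symmetry that defines right-Wieland gyration from left-Wieland gyration, namely the reflection of $G^N$ across a vertical axis. First I would set up this reflection $\rho$ precisely: it sends a TFPL $f$ of size $N$ to a configuration $\rho(f)$ in which the roles of $\mathcal{L}^N$ and $\mathcal{R}^N$ are exchanged, left boundary $u$ and right boundary $v$ are swapped, the bottom boundary $w$ is replaced by its appropriate reversal, and crucially odd cells are interchanged with even cells (since the parity of a cell in $G^N$ flips under horizontal reflection, as one sees from the convention that the top left cell is odd). Under $\rho$, horizontal edges stay horizontal and vertical edges stay vertical, and the relations $\sigma \stackrel{\mathrm{h}}{\longrightarrow} \tau$ and $\sigma \stackrel{\mathrm{v}}{\longrightarrow} \tau$ are compatible with the corresponding word operations on the reflected words; I would record this as a short lemma or simply observe it in passing.

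With $\rho$ in hand, the key identity is that right-Wieland gyration on $f$ with respect to $v^-$ corresponds, under $\rho$, to left-Wieland gyration on $\rho(f)$ with respect to the reflection of $v^-$: this is precisely how $\WR$ was defined (``the symmetric version of left gyration''), so it requires no real work beyond unwinding the definitions. Then the index $i$ such that $L_i$ is adjacent to a horizontal (resp. vertical) edge in $f$ becomes, under $\rho$, an index $i'$ such that $R_{i'}$ is adjacent to a horizontal (resp. vertical) edge in $\rho(f)$, where $i' = N+1-i$. Applying Proposition~\ref{PropRightBoundary} to $\rho(f)$ gives that the right boundary of $\WL(\rho(f))$ has a $0$ in position $i'$ (resp. $i'+1$) for such indices and $1$ elsewhere; translating back through $\rho$ — which turns the right boundary into the left boundary and reverses and complements positions — converts the $0$ in position $i'$ into a $1$ in position $N+1-i' = i$, and the $0$ in position $i'+1$ into a $1$ in position $N+1-(i'+1) = i-1$, and turns the remaining $1$'s into $0$'s. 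This is exactly the claimed description of $u^+$.

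The main obstacle, such as it is, is purely bookkeeping: getting the index arithmetic for the reflection exactly right (the off-by-one shifts coming from the ``vertex to the left of $R_i$'' versus ``vertex to the right of $L_i$'' conventions, and the fact that left gyration inserts vertices $L'_i$ to the left while right gyration inserts them to the right), and making sure the complementation of boundary words under $\rho$ is handled correctly — recall $u_i = 1$ encodes degree $1$ on the left but $v_i = 1$ encodes degree $0$ on the right, so the reflection is not a naive word reversal but a reversal combined with a bit-flip, and one must check this flip interacts correctly with the statement. I would therefore be careful to state the precise form of $\rho$ on boundary triples before invoking it, and then the proof of Proposition~\ref{PropLeftBoundary} reduces to one line: it is the image under $\rho$ of Proposition~\ref{PropRightBoundary}. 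Alternatively, if one prefers to avoid setting up $\rho$ formally, the statement can be proved directly by repeating verbatim the argument in the proof of Lemma~\ref{WielandRightBoundaryTFPL} and Proposition~\ref{PropRightBoundary} with left and right, odd and even, and horizontal and vertical interchanged; but the reflection argument is cleaner and I would favour it.
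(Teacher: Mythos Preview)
Your proposal is correct and matches the paper's approach: the paper simply introduces Propositions~\ref{ImageRightWieland} and~\ref{PropLeftBoundary} as ``immediate symmetrical versions'' of Propositions~\ref{imageWielandTFPL} and~\ref{PropRightBoundary} without giving any further argument. Your reflection $\rho$ makes this symmetry explicit, and your bookkeeping (in particular the reverse-complement on boundary words forced by the opposite degree conventions for $u$ and $v$) is exactly what is needed to justify the one-line ``by symmetry'' that the paper leaves implicit.
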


Given a TFPL with right boundary $v$, the effect of left-Wieland gyration along the right boundary of the TFPL is inverted by right-Wieland gyration with respect to $v$. On the other hand, given a TFPL with left 
boundary $u$ the effect of right-Wieland gyration along the left boundary is inverted by left-Wieland gyration with respect to $u$. Since Wieland gyration is an involution on each cell, it follows:

\begin{Thm}\label{Thm:WielandBijectiveTFPL} \begin{enumerate}
             \item Let $f$ be a TFPL with boundary $(u^+,v;w)$ and $u$ be a binary word such that $u\stackrel{\mathrm{h}}{\longrightarrow}u^+$. 
             Then 
             \[
             \WR_v(\WL_{u}(f))=f.
             \]
             \item Let $f$ be a TFPL with boundary $(u,v^+;w)$ and $v$ be a binary word such that 
             $v\stackrel{\mathrm{v}}{\longrightarrow}v^+$. Then 
             \begin{equation}
             \notag \WL_u(\WR_{v}(f))=f.
             \end{equation}
             
            \end{enumerate}

\end{Thm}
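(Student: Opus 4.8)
\textbf{Plan of proof for Theorem~\ref{Thm:WielandBijectiveTFPL}.}
The strategy is to prove part (1) in detail, since part (2) follows by the symmetry between left- and right-Wieland gyration. The key observation is that both $\WL_u$ and $\WR_v$ are built from the same cell-local operation $\Wie$, which is an involution on each active cell, so the only genuine content of the theorem lies in checking that the auxiliary steps dealing with the left and right boundaries of a TFPL — the insertion and deletion of columns of vertices — undo one another correctly. First I would set up notation carefully: starting from $f$ with boundary $(u^+,v;w)$, apply Definition~\ref{def:LeftWieland} with respect to $u$ to obtain $g:=\WL_u(f)$; by Proposition~\ref{imageWielandTFPL} and Proposition~\ref{PropRightBoundary}, $g$ is a TFPL with boundary $(u,v^+;w)$ where $v\stackrel{\mathrm v}{\longrightarrow}v^+$, so that $\WR_v(g)$ is well defined. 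One then unwinds the three steps of right-Wieland gyration applied to $g$ and verifies that the composite cell configuration agrees with $f$ everywhere.

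The core of the argument is to track what happens on the interior cells versus the boundary cells. On an interior odd cell, $\WL_u$ applies $\Wie$ once; the subsequent $\WR_v$ leaves this cell's position untouched by its own $\Wie$-step (which acts on even cells), but the two horizontal shifts and the column insertions/deletions must be accounted for so that the odd cells of $g$ line up with the odd cells that were active in $f$; I would check that after the shift in $\WL_u$ and the shift in $\WR_v$, the cell that was odd in $f$ is again acted on by $\Wie$ in the $\WR_v$-step, and since $\Wie$ is an involution the original configuration is restored. The boundary bookkeeping is where care is needed: in $\WL_u$ one inserts vertices $L'_i$ to the \emph{left}, adds edges governed by $u$ via the rule $u\stackrel{\mathrm h}{\longrightarrow}u^+$, applies $\Wie$ to odd cells, then \emph{deletes} the right column $\mathcal R^N$; in $\WR_v$ one does the mirror-image operations, inserting a right column governed by $v$ (via $v\stackrel{\mathrm v}{\longrightarrow}v^+$) and deleting the left column. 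I would argue that the right column of $g$ carries exactly the data $v^+$, which by Proposition~\ref{PropLeftBoundary}-type reasoning (its right-gyration analogue) is precisely what is needed for $\WR_v$ to reconstruct, in the leftmost column of cells of $g$, the edges incident to $\mathcal R^N$ in $f$ that were deleted; symmetrically, the leftmost column of $g$, which encodes $u$, is the data consumed and deleted by $\WR_v$, and its deletion exactly cancels the column $L'_i$ that $\WL_u$ had inserted.

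The key lemma I expect to invoke repeatedly is the sentence preceding the theorem in the excerpt: ``given a TFPL with right boundary $v$, the effect of left-Wieland gyration along the right boundary of the TFPL is inverted by right-Wieland gyration with respect to $v$'', together with its left-boundary counterpart. Modulo these two facts, the proof reduces to: (a) the interior cells are restored because $\Wie\circ\Wie=\mathrm{id}$ on each cell and the alternating odd/even activation together with the two unit shifts puts each cell back in register; (b) the two column-insertion/column-deletion pairs cancel in the order (insert left, delete right) then (insert right, delete left); and (c) verifying that the shifts compose to the identity on the underlying graph $G^N$. I would therefore organize the write-up as: restore the interior, then the right boundary column, then the left boundary column, citing Proposition~\ref{PropRightBoundary} and Proposition~\ref{PropLeftBoundary} for the precise edge data. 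Part (2) is then obtained verbatim by exchanging the roles of $\WL$ and $\WR$, of $u$ and $v$, of horizontal and vertical strips, and of ``left'' and ``right'' throughout.

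\textbf{Main obstacle.} The principal difficulty is purely combinatorial-geometric bookkeeping rather than conceptual: one must be scrupulous about the two ``shift by one unit'' operations and about the fact that $\WL$ and $\WR$ insert/delete columns on opposite sides, so that writing down a clean correspondence between the cells of $f$ and the cells of $\WR_v(\WL_u(f))$ — and confirming that the boundary edges added by one gyration are exactly those deleted and then re-created by the other — is the step most prone to off-by-one errors. I expect this to be handled most cleanly by fixing a coordinate system on $G^N$ once and for all and describing both gyrations as maps on labelled cells, after which the involutivity of $\Wie$ does the rest.
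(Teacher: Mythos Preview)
Your proposal is correct and follows essentially the same approach as the paper: the paper's entire proof is the one-sentence paragraph immediately preceding the theorem, which says that the boundary effects of left-Wieland gyration are undone by right-Wieland gyration with respect to $v$ (and vice versa), and that since $\Wie$ is an involution on each cell the result follows. Your write-up simply unpacks this sketch in considerably more detail, with the same two ingredients (involutivity of $\Wie$ on cells, plus boundary column bookkeeping) and the same citations to Propositions~\ref{PropRightBoundary} and~\ref{PropLeftBoundary}.
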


\begin{Rem}
It is perhaps useful to point out that $\WR(\WL(f))\neq f$ in general. Indeed by Lemma~\ref{WielandRightBoundaryTFPL} equality will hold precisely when all vertices $R_i$ of degree one are adjacent to horizontal edges.
\end{Rem}

In Section~\ref{Section:stable}, we will study the behaviour of TFPLs under iterated applications of $\WL$.
In Figure~\ref{Example:EventuallyStable}, an example of a TFPL to which left-Wieland gyration is repeatedly applied is depicted: one checks that the last TFPL in the sequence is  invariant by left-Wieland gyration. 
In the following, a TFPL that is invariant under left-Wieland gyration is said to be \emph{stable}.

\begin{figure}[tbh]
\centering
\includegraphics[width=1\textwidth]{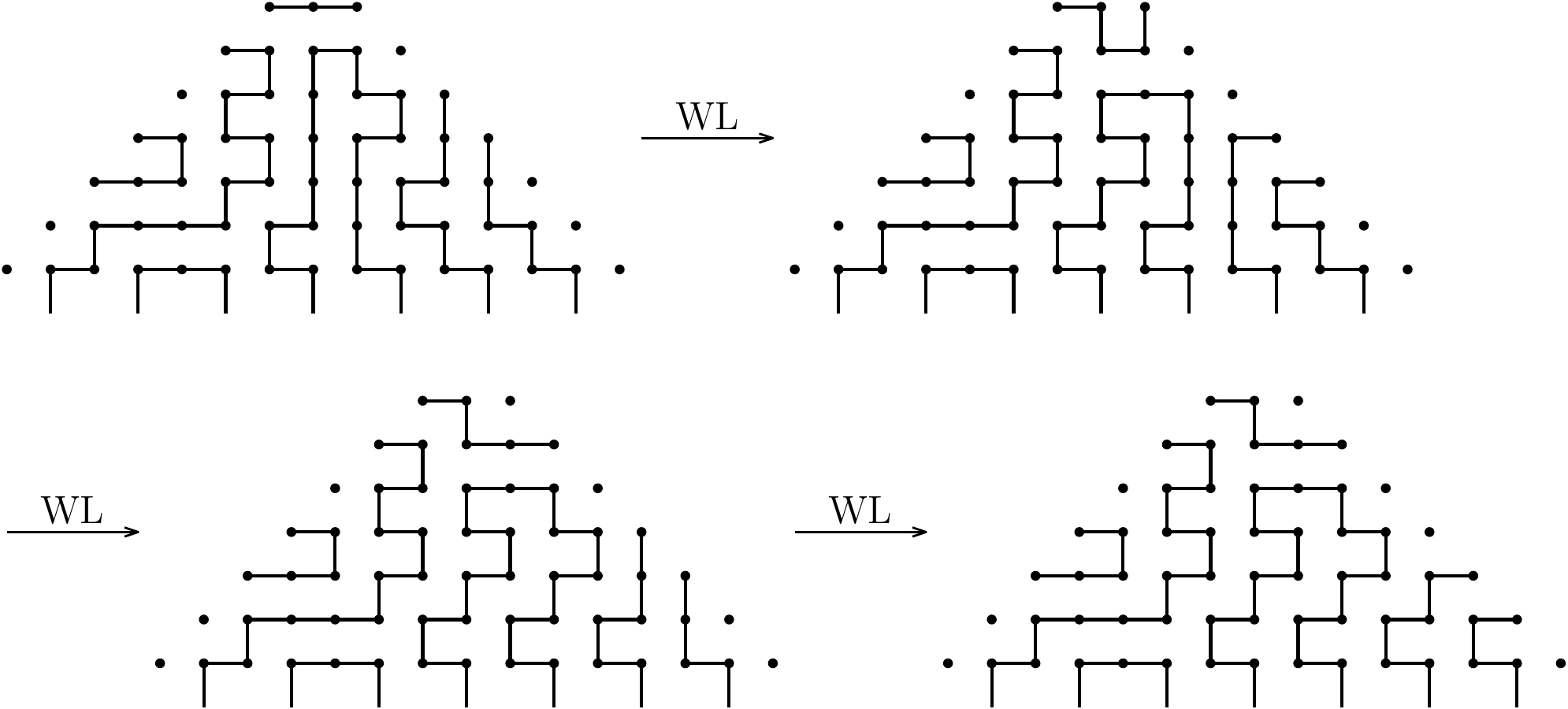}
\caption{A TFPL to which left-Wieland gyration is repeatedly applied.}
\label{Example:EventuallyStable}
\end{figure}

Given a TFPL $f$, the sequence $(\WL^m(f))_{m\geq 0}$ is eventually periodic since there are only finitely many TFPLs of a fixed size. The length of this period is in fact always 1.

\begin{Thm}\label{Thm:EventuallyStable}
Let $f$ be a TFPL of size $N$. Then $\WL^{2N-1}(f)$ is stable, so that the following holds for all $m\geq 2N-1$ :
\[
\WL^m(f)=\WL^{2N-1}(f).
\]
The same holds for right-Wieland gyration.
\end{Thm}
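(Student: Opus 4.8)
The plan is to track a monovariant under left-Wieland gyration and to use the classification of stable TFPLs (Theorem~\ref{Thm:StableTFPL}) to conclude. By Proposition~\ref{imageWielandTFPL}, $\WL$ sends a TFPL with boundary $(u,v;w)$ to one with boundary $(u,v^+;w)$ where $v\stackrel{\mathrm v}{\longrightarrow}v^+$; in particular the bottom word $w$ is preserved throughout, and since $v^+$ is obtained from $v$ by a vertical strip move we have $\lambda(v)\subseteq\lambda(v^+)$, hence $|\lambda(v)|\le |\lambda(v^+)|$. So the sequence of right-boundary sizes $\bigl(|\lambda(v^{(m)})|\bigr)_{m\ge 0}$ along the orbit $(\WL^m(f))_{m\ge0}$ is weakly increasing. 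Since $\lambda(v^{(m)})$ must always fit inside $\lambda(w)$ by~\eqref{Necessary2}, this quantity is bounded, so it must stabilize; and by Lemma~\ref{WielandRightBoundaryTFPL}, once $v^{(m+1)}=v^{(m)}$ it means no vertex of $\mathcal R^N$ is incident to a vertical edge, which is precisely the situation in which $\WL$ makes no ``net'' change along the right boundary.

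The crux is then to upgrade ``the boundary eventually stabilizes'' to ``the whole TFPL eventually stabilizes.'' Here I would invoke Theorem~\ref{Thm:StableTFPL}: a TFPL is stable iff it contains no drifters. So I would argue that once $v^{(m)}$ has stabilized, the TFPL $g:=\WL^m(f)$ satisfies $\WL(g)$ has the same boundary as $g$, and then show that such a boundary-fixed TFPL must in fact already be stable. The idea is that applying $\WL$ to a boundary-fixed TFPL can only push drifters ``to the right'' (this is essentially what the analysis behind Theorem~\ref{Thm:StableTFPL} and the right-boundary Lemma~\ref{WielandRightBoundaryTFPL} shows: a drifter reaching the rightmost column would force a vertical edge at some $R_i$, contradicting boundary-fixedness), so a TFPL with fixed boundary that still had a drifter could not have fixed boundary --- contradiction. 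Hence boundary-fixed implies stable, and the orbit is constant from the moment the right boundary stabilizes.

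Finally I would extract the explicit bound $2N-1$. The right-boundary size $|\lambda(v^{(m)})|$ lies between $0$ and $|\lambda(w)|$, but a crude count of how much it can increase per step is not tight enough; instead I would argue as in the drifter picture that each application of $\WL$ that changes the configuration moves every drifter at least one column to the right, and a drifter lives in a row of $G^N$ which has at most $2N+1$ vertices, hence at most roughly $N$ positions, so after fewer than $2N$ steps all drifters have been flushed out through the right boundary. Pinning down the constant to exactly $2N-1$ will require the careful bookkeeping of where drifters can start and how far they travel, and this --- reconciling the combinatorial travel bound with the size parameter $N$ --- is the step I expect to be the main obstacle; the rest is a direct consequence of Proposition~\ref{imageWielandTFPL}, Lemma~\ref{WielandRightBoundaryTFPL}, and Theorem~\ref{Thm:StableTFPL}. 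The statement for right-Wieland gyration follows by the left--right symmetry already noted after Proposition~\ref{ImageRightWieland}, applied to the mirror image of $f$.
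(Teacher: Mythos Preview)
Your outline has the right intuition (drifters get pushed rightward and eventually disappear; then Theorem~\ref{Thm:StableTFPL} applies), but two of the steps you rely on do not hold as stated.

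\textbf{Gap 1: ``boundary-fixed implies stable'' is false.} From Lemma~\ref{WielandRightBoundaryTFPL}, $v^{(m+1)}=v^{(m)}$ only says that no vertex of $\mathcal{R}^N$ is incident to a drifter in $\WL^m(f)$; it says nothing about drifters away from the right boundary. A TFPL can perfectly well have $v^+=v$ while still containing an interior drifter, and by Proposition~\ref{Prop:InstableTFPLs} such a TFPL is \emph{not} stable. Your justification (``the drifter would eventually reach $\mathcal{R}^N$ and force a change'') concerns future iterations, not the current one, and already presupposes the drifter-drift statement you have not yet established. So the monovariant $|\lambda(v^{(m)})|$ by itself gives only eventual periodicity of the boundary, not stability of the configuration, and in any case its range is bounded by $|\lambda(w)|$ (quadratic in $N$), not by $2N-1$.

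\textbf{Gap 2: ``every drifter moves at least one column to the right'' is not what happens.} Individual drifters are created and destroyed by $\WL$; there is no well-defined tracking of a single drifter across iterations. What the paper proves (Proposition~\ref{Prop:EventuallyStable}) is a statement about the \emph{drifter-free prefix}: if columns $1,\ldots,n-1$ contain no drifter in $f$, then columns $1,\ldots,n$ contain no drifter in $\WL(f)$. The proof uses Lemma~\ref{Lemma:CellsStableTFPL} to show that every odd cell lying entirely in the drifter-free region is in $\mathfrak{O}$, hence its image under $\Wie$ is in $\mathfrak{E}$, so the corresponding even cells of $\WL(f)$ are drifter-free. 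This gives the monovariant you actually need: the index of the leftmost drifter column strictly increases at each step. Since $G^N$ has $2N+1$ columns and column~$1$ consists only of $L_1$ (so $n\geq 2$ from the start), at most $2N-1$ applications suffice to clear all drifters, and Theorem~\ref{Thm:StableTFPL} finishes. Your ``roughly $N$ positions'' count is also off; there are $2N+1$ columns, and the constant $2N-1$ comes precisely from $2N+1-n$ with $n\ge 2$.
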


For that purpose, it is necessary to characterize TFPLs that are invariant under left-Wieland gyration. Note that a TFPL is invariant
under left-Wieland gyration if and only if it is invariant under right-Wieland gyration by Theorem~\ref{Thm:WielandBijectiveTFPL}.

\section{Stable TFPLs}
\label{Section:stable}

 From now on the vertices of $G^N$ are partitioned into odd and even
vertices in a chessboard manner such that by convention the vertices in $\mathcal{L}^N$ are odd. In our pictures, the odd vertices are depicted by circles and the even vertices by squares. An example of a TFPL where the partition of its vertices into odd and even vertices is indicated is depicted in \hbox{Figure~\ref{StableExcess1}}.

\begin{figure}[tbh]
\centering
\includegraphics[width=.5\textwidth]{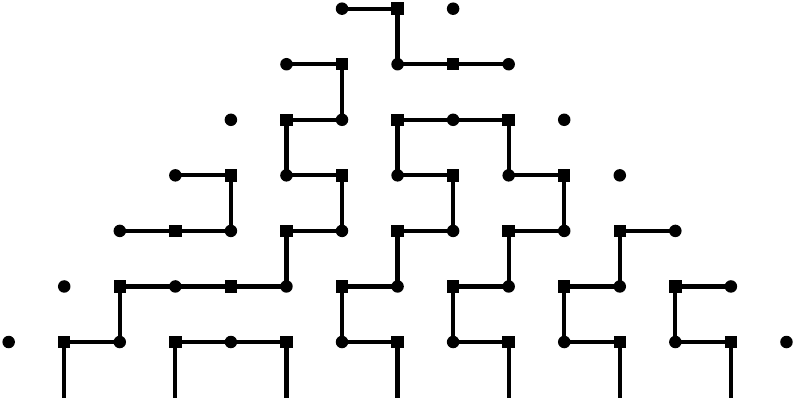}
\caption{The bottom right TFPL configuration of size 7 in Figure~\ref{Example:EventuallyStable} with its odd resp. even vertices illustrated by circles resp. squares.}
\label{StableExcess1}
\end{figure}

It will be proven that stable TFPLs can be characterized as follows:

\begin{Thm}\label{Thm:StableTFPL}
A TFPL is stable if and only if it contains no edge of the form $\vcenter{\hbox{\includegraphics[width=.0125\textwidth]{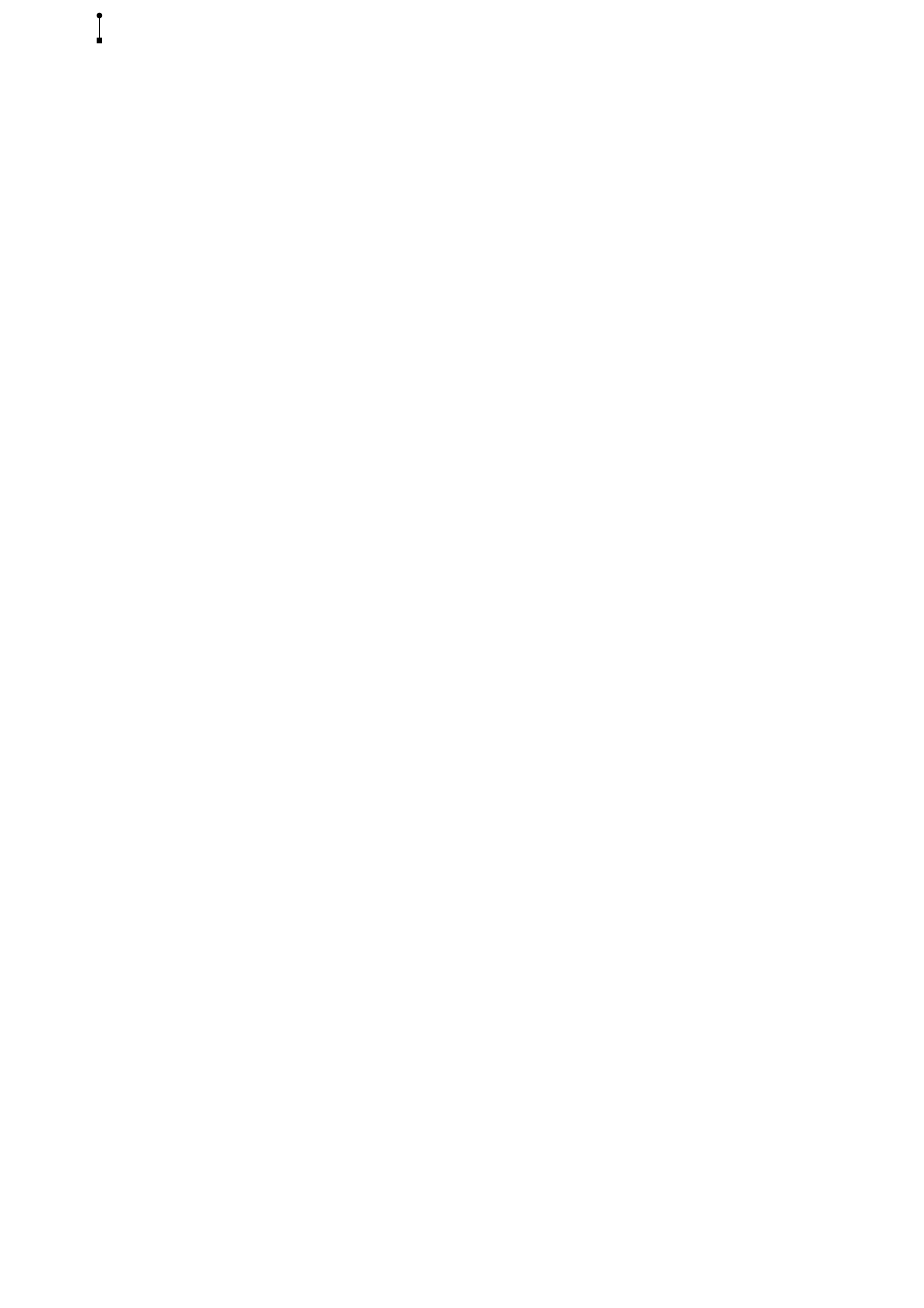}}}$.
\end{Thm}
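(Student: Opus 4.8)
The plan is to analyze the local effect of left-Wieland gyration on each odd cell of a TFPL and to identify exactly which local configurations can change under $\WL$. Since $\WL$ is obtained by applying $\Wie$ to every odd cell (after the auxiliary surgery along the left boundary and deletion along the right boundary), a TFPL $f$ with $u^-=u$ fails to be stable precisely when at least one odd cell is altered, or the boundary surgery produces something different. The first step is thus to enumerate the odd-cell configurations: an odd cell of $f$ either has two opposite edges (invariant under $\Wie$) or not (changed by $\Wie$). So one would expect ``stable'' to force every odd cell to have the two-opposite-edges pattern — but this cannot be literally true, since an odd cell with, say, a single edge along one side would be changed, yet such cells certainly occur in stable TFPLs because of how the boundary words shift. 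The resolution is that a local change in an odd cell can be \emph{compensated} by the rightward shift together with the boundary surgery, so that the resulting subgraph of $G^N$ is literally the same as $f$. The key is therefore to trace, vertex by vertex, the identification between a vertex $x$ of $f$ and the vertex $x'$ (then shifted) of $\WL(f)$, and to see which configurations of edges around a vertex are preserved under this identification.

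\textbf{Key steps.} First I would set up coordinates so that the shift-by-one identifies the vertex in column $c$, row $r$ of $\WL(f)$ with the vertex in column $c-1$, row $r$ of the pre-shift picture $f'$. An interior vertex $v$ of $\WL(f)$ inherits its two incident edges from the two odd cells of $f$ that touch the corresponding vertex; writing out the four odd cells around any interior vertex and applying $\Wie$ to the two odd ones, one sees that the edge-set at $v$ in $\WL(f)$ equals the edge-set at $v$ in $f$ \emph{unless} a certain local pattern is present. I would carry this out by a finite case check over the $2\times 2$ block of cells around a vertex. The outcome should be: the only obstruction to $f = \WL(f)$ at an interior edge is the presence of a ``drifter'' edge — the small edge pattern $\vcenter{\hbox{\includegraphics[width=.0125\textwidth]{drifter.pdf}}}$ — which is the unique local configuration that $\Wie$ moves one unit to the left when we un-shift (hence a non-edge becomes an edge at $v$ while the edge at the neighbour to the right of $v$ disappears). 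Second, I would handle the two boundary strips separately: along $\mathcal{L}^N$, step (1) of Definition~\ref{def:LeftWieland} with $u^-=u$ adds exactly the edges that make the new left-boundary vertices reproduce $u$ and feed consistently into the odd cells; along $\mathcal{R}^N$, step (3) deletes the rightmost column, and after the shift the new rightmost column is exactly the old second-to-rightmost; using Proposition~\ref{PropRightBoundary} one checks that $v^+=v$ iff no vertex of $\mathcal{R}^N$ is incident to a vertical edge of $f$ — which by Lemma~\ref{WielandRightBoundaryTFPL} is already known — and that in that case the boundary surgery is invisible. Finally I would assemble these: $\WL(f)=f$ iff no odd cell produces a moved edge and the boundary words are unchanged, and both conditions reduce to the single statement that $f$ contains no drifter.

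\textbf{Converse direction.} For the ``if'' direction, assuming $f$ has no drifter, I would verify cell by cell that $\Wie$ applied to each odd cell, combined with the left-boundary insertion (with $u^-=u$) and the right-boundary deletion and the shift, returns each edge and non-edge of $f$ to itself. The absence of drifters must be shown to imply, in particular, that no vertex of $\mathcal{R}^N$ carries a vertical edge — a drifter being precisely the local obstruction that otherwise appears near such a vertex — so that Lemma~\ref{WielandRightBoundaryTFPL} gives $v^+=v$ and step (3) followed by the shift is an identity on the right strip; symmetrically, the absence of drifters near $\mathcal{L}^N$ means step (1) merely recreates the edges already incident to $\mathcal{L}^N$. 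Combined with the interior analysis this yields $\WL(f)=f$.

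\textbf{Main obstacle.} The delicate part is the interior case analysis: one must be careful that the vertices of $G^N$ are bicolored (odd/even) so that around any given vertex exactly two of the four surrounding cells are odd, and the two odd cells are diagonally opposite; getting the bookkeeping of ``which odd cell contributes which edge to which shifted vertex'' correct — and confirming that the drifter pattern is genuinely the \emph{only} local configuration that survives $\Wie$ in a displaced position — is where the real work lies. A clean way to organize it is to note that $\Wie$ is an involution on each cell, so an edge of $\WL(f)$ at an interior vertex $v$ comes from an edge of $f$ at $v$ or at a neighbour of $v$ within a common odd cell; tabulating the finitely many ways this can happen, under the degree-2 constraint at interior vertices of $f$, pins down the drifter as the sole culprit. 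I expect this to be the longest but entirely elementary portion of the argument.
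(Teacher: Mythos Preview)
Your overall strategy---local cell analysis plus separate boundary bookkeeping---is the same as the paper's, and your ``if'' direction (no drifter $\Rightarrow$ stable) is essentially the paper's argument: the paper packages it as the observation that in a drifter-free TFPL every odd cell $o$ belongs to a short list $\mathfrak{O}$, the even cell $e$ to its right is forced, and $\Wie(o)=e$; after the shift this says $\WL(f)=f$. Your vertex-by-vertex formulation would reach the same conclusion with more bookkeeping. The boundary claims you need (in particular that any vertical edge at a vertex of $\mathcal{R}^N$ is a drifter, so ``no drifter'' forces $v^+=v$) are correct.

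The gap is in the ``only if'' direction. You assert that the case check will show a \emph{local biconditional}: the edge-set at a vertex $v$ is preserved by $\WL$ if and only if no drifter sits at $v$. But this biconditional can fail. Under $\WL$ a drifter typically moves one column to the right, so at an interior vertex $v$ that carries a drifter in $f$, that drifter can be replaced in $\WL(f)$ by a drifter that migrated in from the column to the left, leaving the edge-set at $v$ unchanged. In other words, local changes can compensate in a chain of drifters, and your vertex-local analysis does not by itself produce a vertex where $f$ and $\WL(f)$ differ.

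The paper resolves this with an extremal argument: take a drifter $\iota$ with maximal $x$-coordinate and look at the odd cell $o$ containing it. If $\iota$ touches $\mathcal{R}^N$, Lemma~\ref{WielandRightBoundaryTFPL} gives $v^+\neq v$ and we are done. Otherwise, maximality forces the two right vertices of $o$ to be drifter-free, which restricts $o$ to five explicit types; in each of these one checks that either a new drifter appears one column to the right (where $f$ had none, by maximality) or the right edge of $o$ is destroyed. Either way $\WL(f)\neq f$. You should incorporate this rightmost-drifter step---your purely local plan does not substitute for it.
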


The TFPL depicted in Figure~\ref{StableExcess1} is stable by Theorem~\ref{Thm:StableTFPL}.

\begin{Def}
An edge of the form $\vcenter{\hbox{\includegraphics[width=.0125\textwidth]{drifter.pdf}}}$ is called a \emph{drifter}. 
\end{Def}

In the following, the possible cells of a TFPL play an important role in the proofs. For convenience, notations for the 16 odd and 16 even cells of a TFPL are fixed. In Figure~\ref{ListCells} the chosen notation can be seen.

\begin{figure}[tbh]
\includegraphics[width=0.8\textwidth]{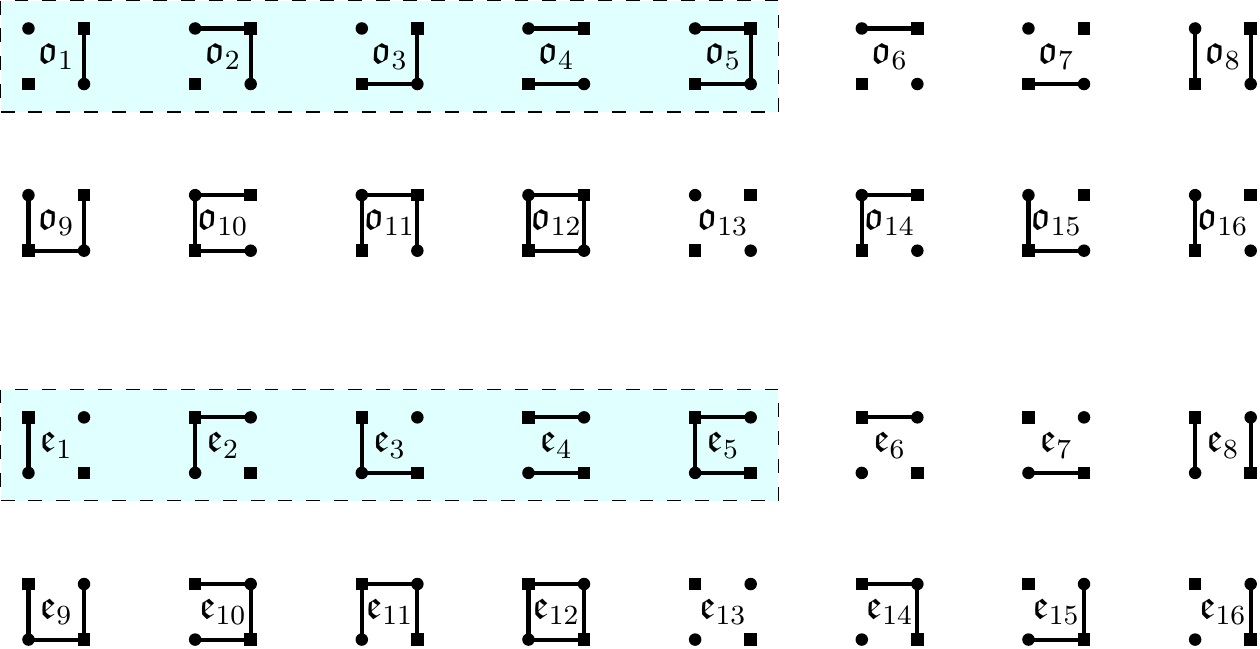}
\caption{The notations for the 16 odd and 16 even cells of a TFPL, with emphasis on the subsets $\mathfrak{O}=\{\mathfrak{o}_1,\mathfrak{o}_2,
\mathfrak{o}_3,\mathfrak{o}_4,\mathfrak{o}_5\}$ and $\mathfrak{E}=\{\mathfrak{e}_1,\mathfrak{e}_2, \mathfrak{e}_3,\mathfrak{e}_4,\mathfrak{e}_5\}$.}
\label{ListCells}
\end{figure}

\subsection{Characterization of stable TFPLs}
\label{Sub:CharacterizationStable}

To prove Theorem~\ref{Thm:StableTFPL}, we will begin by showing that a TFPL containing a drifter is not stable.

\begin{Prop}\label{Prop:InstableTFPLs}
Let $f$ be a TFPL that contains a drifter. Then $\WL(f)\neq f$.
\end{Prop}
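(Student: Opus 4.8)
The plan is to show that the presence of a drifter produces a strict change somewhere in the configuration upon applying $\WL$, so that $\WL(f)\neq f$. A drifter is an edge of the form $\vcenter{\hbox{\includegraphics[width=.0125\textwidth]{drifter.pdf}}}$, i.e.\ a vertical edge joining an odd vertex on top to an even vertex below (with the chessboard convention that $\mathcal L^N$ is odd). First I would pin down \emph{which} vertical edges can be drifters: since $\WL$ uses the odd cells as active cells, a drifter sits on the boundary between two horizontally adjacent cells, one odd and one even; the key is to analyze how such a vertical edge behaves relative to the odd cell it borders. Looking at the list of $16$ odd cells in Figure~\ref{ListCells}, a vertical edge on the left or right side of an odd cell forces that cell into one of a small number of types, and Wieland gyration $\Wie$ on that cell either leaves the vertical edge in place or moves it.

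The heart of the argument is a local case analysis. I would argue that a drifter cannot survive untouched: pick, among all drifters in $f$, an extremal one — say a drifter $e$ such that no drifter lies strictly "to its left and above" in an appropriate partial order, or more concretely the topmost drifter in its diagonal, or the leftmost drifter in its row. For such an extremal drifter $e$, consider the odd cell $c$ having $e$ as one of its vertical sides. I would show that the configuration of edges of $f$ inside $c$ (together with the edges around $c$, using degree constraints from Definition~\ref{defi:tfpl} and the fact that the other side of $c$ is not a drifter) is one of the cell types on which $\Wie$ acts nontrivially and moves $e$. Hence $e$ is not present at the same position in $f'=\WL(f)$ — or, if one prefers to compare the full image after the shift and the insertion/deletion steps in Definition~\ref{def:LeftWieland}, one tracks that the image has either strictly fewer drifters to the extreme, or an edge present in $f$ that is absent in $\WL(f)$. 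Either way $\WL(f)\neq f$.

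The main obstacle I expect is bookkeeping: $\WL$ is not simply "apply $\Wie$ to all odd cells" — it also inserts vertices $L'_i$ with horizontal or vertical edges according to $u^-=u$, deletes the vertices of $\mathcal R^N$, and shifts everything one unit right. So to compare $f$ with $\WL(f)$ I must match up vertices correctly (the notation $x\mapsto x'$ from the excerpt helps here) and make sure the drifter I am tracking is an \emph{interior} edge, or else handle the boundary columns (the leftmost column, where the $L'_i$ insertion happens, and the rightmost column, where deletion happens) as separate small cases. The cleanest route is probably: show that if $f$ is stable then in particular $\Wie$ fixes every odd cell of $f$ (since, combined with the vertex insertion matching $u^-=u$ and the $\mathcal R^N$ deletion, stability forces cell-by-cell invariance), and then prove directly that a cell containing a drifter on one of its sides is \emph{not} fixed by $\Wie$ unless forced by a neighbouring drifter — which, by extremality, is impossible. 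Contraposition then gives the proposition.

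A secondary subtlety is the orientation and parity of the drifter: because odd and even vertices alternate and $\mathcal L^N$ is odd, a vertical edge from an odd vertex down to an even vertex is in a definite position relative to the odd/even cell colouring, and I should double-check that this is exactly the configuration for which the adjacent odd cell has a side-edge that $\Wie$ rotates (as opposed to a vertical edge that $\Wie$ would leave alone because it sits on two \emph{opposite} sides). This parity check is what makes the statement true with "drifter" rather than with "arbitrary vertical edge", so it deserves explicit attention, but it is a finite verification against Figure~\ref{ListCells} and not a genuine difficulty.
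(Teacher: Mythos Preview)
Your ``cleanest route'' contains a genuine gap: it is \emph{not} true that $\WL(f)=f$ forces $\Wie$ to fix every odd cell of $f$. The operation $\WL$ applies $\Wie$ to the odd cells and then \emph{shifts the entire picture one unit to the right}; thus stability means that, for each odd cell $o$ with even cell $e$ immediately to its right, $\Wie(o)$ coincides with $e$, not with $o$. Concretely, in a stable TFPL the odd cell $\mathfrak{o}_1$ occurs (by Lemma~\ref{Lemma:CellsStableTFPL}), and $\Wie(\mathfrak{o}_1)\neq\mathfrak{o}_1$. So the contrapositive you propose (``stable $\Rightarrow$ every odd cell fixed by $\Wie$ $\Rightarrow$ no drifter'') fails at the first implication, and any argument that compares $f$ to $\WL(f)$ cell-by-cell \emph{at the same location} is doomed.

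Your earlier sketch (pick an extremal drifter, do a case analysis on its odd cell) is salvageable and is in fact what the paper does, but the extremal direction matters because of the shift. The paper chooses a drifter $\iota$ with \emph{maximal} $x$-coordinate. It first disposes of the case where some drifter touches $\mathcal{R}^N$ by invoking Lemma~\ref{WielandRightBoundaryTFPL} (the right boundary changes). Otherwise, maximality forces the right-hand vertices $x,y$ of the odd cell $o$ containing $\iota$ to be drifter-free, which pins $o$ down to one of $\mathfrak{o}_8,\mathfrak{o}_9,\mathfrak{o}_{10},\mathfrak{o}_{11},\mathfrak{o}_{12}$. Then one compares $\WL(f)$ to $f$ at the position \emph{one step to the right} (where $x',y'$ land after the shift): either a new drifter appears there, or the edge $xy$ disappears. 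Choosing a leftmost or topmost drifter instead would force you to argue through the $L'_i$-insertion on the left side rather than through the clean right-boundary lemma, and the shift would work against you rather than for you.
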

\begin{proof}
If $f$ contains a drifter incident to a vertex in $\mathcal{R}^N$, then by Lemma~\ref{WielandRightBoundaryTFPL} we know that the right boundaries of $f$ and $\WL(f)$ are different, so that necessarily $\WL(f)\neq f$.

We can now assume that no vertex in $\mathcal{R}^N$ is incident to a drifter. Let $\iota$ be a drifter in $f$ with maximal $x$-coordinate, and consider the odd cell $o$ in $f$ that contains $\iota$. Let $x$ be the top 
right vertex of $o$ and $y$ be the bottom right vertex of $o$. By the choice of $\iota$ the vertices $x$ and $y$ are not incident to a drifter. 
\begin{center}
\includegraphics[height=1cm]{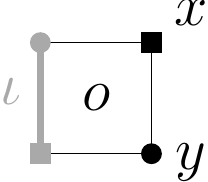}
\end{center}
Therefore, 
$o\in\{\mathfrak{o}_8, \mathfrak{o}_9, \mathfrak{o}_{10}, \mathfrak{o}_{11}, \mathfrak{o}_{12}\}$.
If $o$ is of the form $\mathfrak{o}_8$ or $\mathfrak{o}_{10}$ the vertex to the right of $x'$ is incident to a drifter in $\WL(f)$. In that case, $\WL(f)\neq f$ because the vertex to the right of $x$ in $f$ is not incident to a drifter by assumption.  
If $o$ is of the form $\mathfrak{o}_9$, $\mathfrak{o}_{11}$ or $\mathfrak{o}_{12}$, the vertices $x'$ and $y'$ are not adjacent in $\WL(f)$. Thus,
$\WL(f)\neq f$ because $x$ and $y$ are adjacent in $f$.
\end{proof}

To prove that a TFPL without a drifter is indeed stable, we need to determine the types of cells which may occur. Define $\mathfrak{O}=\{\mathfrak{o}_1,\mathfrak{o}_2,
\mathfrak{o}_3,\mathfrak{o}_4,\mathfrak{o}_5\}$ and $\mathfrak{E}=\{\mathfrak{e}_1,\mathfrak{e}_2,\mathfrak{e}_3,\mathfrak{e}_4,\mathfrak{e}_5\}$.

\begin{Lemma}\label{Lemma:CellsStableTFPL} If $f$ is a TFPL without drifters,   
then all interior odd cells belong to $\mathfrak{O}$ while all of its interior even cells belong to $\mathfrak{E}$.
\end{Lemma}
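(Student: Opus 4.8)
\textbf{Proof proposal for Lemma~\ref{Lemma:CellsStableTFPL}.}

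The plan is to argue by contradiction: assume $f$ is a TFPL without drifters and suppose some interior odd cell $o$ does not belong to $\mathfrak{O}$ (the symmetric argument will handle even cells, either directly or by the left-right symmetry swapping $\WL$ with $\WR$). Recalling the notation of Figure~\ref{ListCells}, there are $16$ odd cells $\mathfrak{o}_1,\dots,\mathfrak{o}_{16}$, so one has to rule out $\mathfrak{o}_6,\dots,\mathfrak{o}_{16}$ occurring as an interior cell. First I would observe that a cell $\mathfrak{o}_i$ contains a drifter exactly for those indices $i$ for which the corresponding picture in Figure~\ref{ListCells} has an edge of drifter shape entirely inside that cell; a quick inspection shows these are precisely the cells outside $\mathfrak{O}$ that already display a drifter as one of their four sides, which eliminates most of $\mathfrak{o}_6,\dots,\mathfrak{o}_{16}$ immediately. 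The remaining few cases are cells that do not themselves contain a drifter but whose occurrence would \emph{force} a drifter elsewhere.

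For those remaining cases the key tool is degree-$2$ propagation together with the parity/chessboard colouring of vertices fixed at the start of Section~\ref{Section:stable}: an interior odd cell has its NW and SE corners odd (circles) and its NE and SW corners even (squares), and each interior vertex has degree exactly $2$ by Definition~\ref{defi:tfpl}(iii). So I would take such a problematic odd cell $o$, look at the forced configuration of edges at its four corner vertices dictated by the edges of $o$ together with the degree-$2$ condition, and trace the edges into the neighbouring cells. In each case this local analysis produces a neighbouring edge of drifter shape — i.e. a diagonal-looking edge pattern of the form $\vcenter{\hbox{\includegraphics[width=.0125\textwidth]{drifter.pdf}}}$ — contradicting the hypothesis that $f$ has no drifters. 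The point is that a drifter is exactly the ``offending'' local pattern, and every odd cell not in $\mathfrak{O}$ either is a drifter, contains a drifter, or creates one in an adjacent cell once the degree constraints are imposed.

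The even-cell statement follows by the same case analysis on $\mathfrak{e}_1,\dots,\mathfrak{e}_{16}$; alternatively, since a TFPL is stable under $\WL$ iff it is stable under $\WR$ (Theorem~\ref{Thm:WielandBijectiveTFPL}) and the roles of odd and even cells are interchanged between the two gyrations, one can deduce the even case from the odd one after reflecting the picture, so it suffices to carry out the analysis once. I expect the main obstacle to be purely bookkeeping: there is no single clever idea, but one must correctly enumerate the $16$ odd (and $16$ even) cell types, correctly read off which of them contain a drifter outright, and then for the handful of borderline cells push the degree-$2$ forcing one cell further without error. Getting the vertex-parity pattern of an interior cell right (which corners are circles and which are squares) is what makes the forcing work, so I would state that explicitly before doing the case check.
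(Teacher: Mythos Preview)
Your proposal is correct and matches the paper's approach closely. The paper observes that an interior odd cell with no drifter among its four sides must lie in $\mathfrak{O}\cup\{\mathfrak{o}_6,\mathfrak{o}_7,\mathfrak{o}_{13}\}$, and then disposes of $\mathfrak{o}_6,\mathfrak{o}_7,\mathfrak{o}_{13}$ by noting that each of them forces a drifter in the interior cell immediately above or below; this is precisely your ``degree-$2$ forcing one cell further'' step, and the even case is declared analogous.

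One small caution: your alternative route for the even case --- invoking stability under $\WL$ versus $\WR$ and reflecting the picture --- does not work cleanly. The lemma is phrased in terms of drifters, not stability, and at this point in the paper the equivalence ``stable $\Leftrightarrow$ no drifters'' has not yet been established (indeed this lemma is a step toward it), so appealing to stability would be circular. Moreover, a left-right reflection swaps odd and even vertices, hence sends a drifter to the \emph{other} type of vertical edge rather than to a drifter. So keep your primary suggestion and repeat the case analysis for even cells directly, as the paper does.
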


\begin{proof}
Let $f$ be a TFPL without a drifter, and $o$ be one of its interior odd cells. Since $o$ has no drifter, it can only belong to $\mathfrak{O}$ or have one of the types $\mathfrak{o}_6,\mathfrak{o}_7$ or 
$\mathfrak{o}_{13}$. But in types $\mathfrak{o}_6,\mathfrak{o}_{13}$ (\emph{resp.} $\mathfrak{o}_7,\mathfrak{o}_{13}$), there would exist an interior cell above $o$ ( \emph{resp.} below $o$) that contains a 
drifter, what is excluded.

The case of even cells is entirely analogous.
\end{proof}

Furthermore, in a TFPL with no drifter each odd cell has a uniquely determined even cell to its right. 

\begin{Lemma}\label{Lemma:PairsStableTFPL} Let $f$ be a TFPL without drifters, $o$ an odd cell of $f$ and $e$ the even cell of $f$ to the right of $o$. If $o$ and $e$ are interior,  then they can only occur as part of one of the following pairs:
\begin{center}
\includegraphics[width=0.8\textwidth]{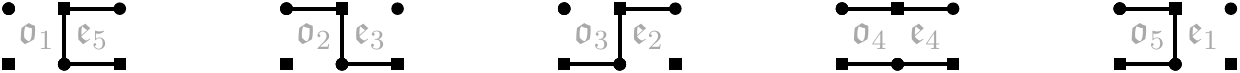}
\end{center} 
On the other hand, if $o$ or $e$ contains an external edge, then $o$ and $e$ can only occur as part of one of the following pairs:
\begin{center}
\includegraphics[width=.3\textwidth]{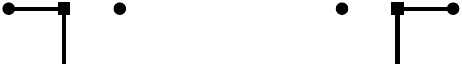} 
\end{center}
\end{Lemma}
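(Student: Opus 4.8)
The statement to prove is Lemma~\ref{Lemma:PairsStableTFPL}, which refines Lemma~\ref{Lemma:CellsStableTFPL} by listing the admissible \emph{pairs} $(o,e)$ of adjacent odd/even cells in a drifter-free TFPL. My plan is a direct case analysis. By Lemma~\ref{Lemma:CellsStableTFPL}, if $o$ and $e$ are both interior then $o\in\mathfrak{O}=\{\mathfrak{o}_1,\dots,\mathfrak{o}_5\}$ and $e\in\mathfrak{E}=\{\mathfrak{e}_1,\dots,\mathfrak{e}_5\}$, so \emph{a priori} there are only $25$ combinations to examine rather than the full $16\times16$. The key constraint is \emph{consistency along the shared vertical edge}: the two vertices on the right side of $o$ are exactly the two vertices on the left side of $e$, and for each of them the status (present/absent) of the shared edge must be the same as seen from $o$ and from $e$, while the degree-$2$ condition (Definition~\ref{defi:tfpl}(iii)) must hold at both shared vertices. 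First I would record, for each of $\mathfrak{o}_1,\dots,\mathfrak{o}_5$, the configuration of edges on its right boundary (which of the NE/SE corner edges and the shared vertical edge are present), and similarly for each of $\mathfrak{e}_1,\dots,\mathfrak{e}_5$ on its left boundary; then I would match them up, discarding every pair that is inconsistent at a shared vertex. The surviving pairs are precisely those drawn in the first figure.

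\textbf{Carrying it out.} For the interior case, the mechanical step is: a shared vertex $p$ (say the top-right vertex of $o$, equivalently top-left vertex of $e$) already has its ``left'' edges determined by $o$ and its ``right'' edges determined by $e$; since $p$ has degree exactly $2$ and is interior, the four slots around $p$ must contain exactly two edges, and the data coming from $o$ and from $e$ must agree on the single shared slot (the edge between the right side of $o$ and the left side of $e$) and must not overfill $p$. Running this check for all $25$ combinations eliminates all but the listed ones; I would present the outcome as the figure \texttt{Cor001.pdf} rather than spelling out all $25$ verifications, noting only the representative eliminations (e.g.\ pairing an $\mathfrak{o}_i$ whose SE-region forces the shared edge present against an $\mathfrak{e}_j$ whose NW-region forbids it). For the boundary case — when $o$ or $e$ contains an external edge, i.e.\ a vertex of $\mathcal{L}^N\cup\mathcal{R}^N$ — Lemma~\ref{Lemma:CellsStableTFPL} no longer restricts the cell to $\mathfrak{O}$ or $\mathfrak{E}$, but the degree-$0$-or-$1$ condition at the boundary vertex (Definition~\ref{defi:tfpl}(ii)), together with condition (iv) forbidding a path between two vertices of $\mathcal{L}^N$ or two of $\mathcal{R}^N$, and the absence of drifters, sharply limits the possibilities; I would enumerate the cells touching $\mathcal{L}^N$ on the left column and those touching $\mathcal{R}^N$ on the right column separately, then again impose consistency along the shared vertical edge with the neighbouring cell, arriving at the short list in \texttt{ExternalCells.pdf}.

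\textbf{Main obstacle.} The genuine difficulty is not logical but bookkeeping: being certain that the case analysis is exhaustive and that no admissible pair has been dropped, given that the ``obvious'' count of $25$ interior combinations hides subtleties at the shared vertex (a vertex can be odd or even, and the parity interacts with which of the two Wieland-cell colourings is relevant). I would mitigate this by organizing the check around the \emph{shared vertical edge}: classify the five cells of $\mathfrak{O}$ by whether that edge is present, and the five cells of $\mathfrak{E}$ likewise, so that any admissible pair must match within one of these two classes; within each class, a further split by the status of the two corner edges at the shared vertices leaves only a handful of genuinely distinct sub-cases, each resolved by the degree-$2$ condition. The boundary case is easier in principle since there are fewer cells, but requires care that condition (iv) is actually invoked wherever a path would otherwise close up between two left (or two right) vertices. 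Once both figures are justified in this way, the lemma is complete.
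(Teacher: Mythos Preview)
Your approach---a direct case analysis, using Lemma~\ref{Lemma:CellsStableTFPL} to restrict to $\mathfrak{O}\times\mathfrak{E}$ and then matching along the shared boundary---is the same as the paper's. There is, however, a gap in your elimination step. Consistency of the shared vertical edge together with ``not overfilling'' the shared vertices does not suffice: for $o=\mathfrak{o}_1$, the candidates $e\in\{\mathfrak{e}_1,\mathfrak{e}_2,\mathfrak{e}_3\}$ pass both of your checks, yet they must be ruled out. The missing ingredient is the no-drifter hypothesis applied \emph{again} at the shared vertices, not merely through Lemma~\ref{Lemma:CellsStableTFPL}. Concretely: at a shared vertex the three edges visible from $o$ and $e$ may leave exactly one present; the degree-$2$ condition then forces the fourth edge---the vertical edge leaving $o\cup e$ upward from the top shared vertex, or downward from the bottom one---to be present. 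Since the top-right vertex of an odd cell is odd and the bottom-right vertex is even, this forced vertical edge is in either case a drifter, contradicting the hypothesis on $f$. This is exactly how the paper argues (``otherwise one of the right vertices of $o$ would be incident to a drifter''). Once you strengthen your check at each shared vertex from ``no overfill'' to ``exactly two of the three visible edges are present'', the case analysis goes through as you describe.

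Your treatment of the external-edge case is adequate; condition~(iv) of Definition~\ref{defi:tfpl} is not actually needed there, only the degree constraint at the boundary vertices together with the absence of drifters.
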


\begin{proof}
Here, only the case when $o$ is an interior odd cell and $o=\mathfrak{o}_1$ is considered, the other cases being similar. Obviously, the cell $e$ cannot equal $\mathfrak{e}_4$. But it cannot equal $\mathfrak{e}_1$, $\mathfrak{e}_2$ or $\mathfrak{e}_3$ either, since otherwise one of the right vertices of $o$ would be be incident to a drifter. The only remaining possibility is that $e$ is of type $\mathfrak{e}_5$ by Lemma~\ref{Lemma:CellsStableTFPL}.
\end{proof}

We can now complete the proof of Theorem~\ref{Thm:StableTFPL} by showing that a TFPL without drifters is invariant under left-Wieland gyration.

\begin{Prop}\label{Prop007}
If $f$ is a TFPL without drifters, then $\WL(f)=f$. 
\end{Prop}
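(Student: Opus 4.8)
The plan is to show that for a TFPL $f$ without drifters, applying left-Wieland gyration with $u^-=u$ reproduces $f$ exactly, by checking the effect of the construction cell-by-cell. First I would recall the three-step definition of $\WL$: insert the new left column, apply Wieland gyration $\Wie$ to every odd cell, delete the old right column, and shift right by one. The key observation is that, by Lemmas~\ref{Lemma:CellsStableTFPL} and~\ref{Lemma:PairsStableTFPL}, the local structure of $f$ is extremely restricted: every interior odd cell lies in $\mathfrak{O}$, every interior even cell lies in $\mathfrak{E}$, and each odd cell is paired with a determined even cell to its right from the short list in Lemma~\ref{Lemma:PairsStableTFPL}. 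So the strategy is to verify that on each such admissible pair (odd cell $o$, even cell $e$ to its right), the composite effect ``apply $\Wie$ to $o$, then shift'' transports the edges of $o$ onto exactly the edges that $e$ occupied before — i.e. after relabelling $x'\mapsto x$ (shift by one), the image configuration agrees with $f$.

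The key steps, in order, would be: (1) set up notation so that a vertex $x$ of $G^N$ and its shifted image $x'$ (one unit left before the final shift) are identified after step (4), so that ``$\WL(f)=f$'' becomes the statement that the edge set is unchanged; (2) handle the new left column: since $u^-=u$, the rule in Definition~\ref{def:LeftWieland}(1a) always applies (the $j$-th one of $u$ sits at the same position as the $j$-th one of $u^-$), so every $L'_{i}$ with $u_i=1$ gets a horizontal edge to $L_i$ and every $L'_i$ with $u_i=0$ stays isolated — after the shift this column becomes the old $\mathcal{L}^N$ column with the same boundary word $u$; (3) run through the finite list of admissible odd/even pairs from Lemma~\ref{Lemma:PairsStableTFPL} and, for each, compute $\Wie(o)$ and check that the resulting edges, once shifted, coincide with the edges of $e$ (this is where the list being drifter-free is essential: $\Wie$ on cells of $\mathfrak{O}$ either fixes them, when two opposite edges are present, or rotates an ``L-shape,'' and in every admissible pairing this rotation matches what the right neighbour $e\in\mathfrak{E}$ already looks like); (4) separately dispatch the boundary cases — cells touching $\mathcal{L}^N$ or $\mathcal{R}^N$ — using the second picture in Lemma~\ref{Lemma:PairsStableTFPL} together with Proposition~\ref{PropRightBoundary}, noting that a drifter-free TFPL has no vertex of $\mathcal{R}^N$ incident to a vertical edge, so Lemma~\ref{WielandRightBoundaryTFPL} gives $v^+=v$ and the right column is simply re-created after the shift; (5) conclude that all edges match, hence $\WL(f)=f$.

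The main obstacle I anticipate is step (3): it is a finite but somewhat delicate case check, and the subtlety is that $\Wie$ acts on the odd cell $o$ but the ``output'' after shifting must be read off against the even cell $e$ to its right, so one must correctly track which edges of $o$ are shared with $e$, which are shared with the odd cell to the left of $o$, and which are purely horizontal edges straddling the $o$–$e$ boundary. Getting the bookkeeping of the one-unit shift right — so that a vertical edge on the right side of $o$ in $f$ becomes a vertical edge on the left side of (the shifted) $o$, which is the right side of the previous cell — is the part most prone to sign-of-direction errors, and is presumably why the authors pre-tabulated the $32$ cell types in Figure~\ref{ListCells} and the admissible pairs in Lemma~\ref{Lemma:PairsStableTFPL}. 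Once one trusts those lemmas, the verification reduces to inspecting the five-or-so admissible interior pairs and the two-or-so boundary pairs, each of which is immediate from the pictures. A clean way to present it is: observe that $\Wie$ fixes an odd cell precisely when it has two edges on opposite sides, check that every $\mathfrak{o}_i$ appearing in an admissible pair either is of that form or its ``rotation'' is forced by the drifter-free constraint to land on the edge pattern of its even neighbour, and remark that the emptiness of the old right column (no vertical edges at $\mathcal{R}^N$) plus the always-horizontal rule at the new left column close the argument at both ends.

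\begin{proof}
Apply $\WL$ with $u^-=u$. In step~(1) of Definition~\ref{def:LeftWieland}, since $u^-=u$ the $j$-th one of $u$ occupies the same position as the $j$-th one of $u^-$, so case~(1a) always applies: for every $i$ with $u_i=1$ a horizontal edge joins $L'_i$ to $L_i$, and every $L'_i$ with $u_i=0$ remains of degree $0$. Thus, after the final shift by one unit, the new leftmost column is an exact copy of the old $\mathcal{L}^N$ column, carrying the same word $u$.

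Since $f$ has no drifter, no vertex of $\mathcal{R}^N$ is incident to a vertical edge, so by Lemma~\ref{WielandRightBoundaryTFPL} we have $v^+=v$, and by Proposition~\ref{PropRightBoundary} the deleted right column is recreated identically after the shift. It remains to check the middle: identify each vertex $x$ of $G^N$ with its image $x'$ after the shift, so that $\WL(f)=f$ is equivalent to the edge set being unchanged.

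By Lemma~\ref{Lemma:CellsStableTFPL} every interior odd cell of $f$ lies in $\mathfrak{O}$ and every interior even cell lies in $\mathfrak{E}$, and by Lemma~\ref{Lemma:PairsStableTFPL} each odd cell $o$ together with the even cell $e$ immediately to its right forms one of the admissible pairs listed there (the interior list, or, when an external edge is involved, the boundary list). For each admissible pair one computes $\Wie(o)$ directly: either $o$ already has two edges on opposite sides, in which case $\Wie$ fixes it, or $o$ carries an ``L-shaped'' pair of edges which $\Wie$ rotates. Inspecting the finitely many admissible pairs, one checks in every case that the edges of $\Wie(o)$, once shifted one unit to the right, coincide with the edges that $e$ occupied in $f$; in particular the vertical edge on the right side of $o$ (if any) becomes the vertical edge on the left side of the shifted $o$, which is precisely the right side of the cell preceding $o$, matching $f$. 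The boundary pairs are dispatched the same way using the second picture in Lemma~\ref{Lemma:PairsStableTFPL}.

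Combining the three parts — the recreated left column, the recreated right column, and the cell-by-cell match in between — every edge of $\WL(f)$ agrees with an edge of $f$ and conversely. Hence $\WL(f)=f$.
\end{proof}
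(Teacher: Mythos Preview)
Your proposal is correct and follows essentially the same approach as the paper: the core observation is that for each odd cell $o$ in the admissible pairs of Lemma~\ref{Lemma:PairsStableTFPL}, the image $\Wie(o)$ coincides with the even cell $e$ to its right, so after the shift every even cell of $\WL(f)$ matches the corresponding even cell of $f$. The paper's proof compresses your steps (2)--(4) into that single sentence and handles the right boundary implicitly through the boundary pairs in Lemma~\ref{Lemma:PairsStableTFPL}, rather than invoking Lemma~\ref{WielandRightBoundaryTFPL} and Proposition~\ref{PropRightBoundary} separately as you do; otherwise the arguments are the same.
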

\begin{proof} Let $o$ be an odd cell of $f$ and $e$ be the even cell to its right. By \hbox{Lemma~\ref{Lemma:PairsStableTFPL}}, $e$ is uniquely determined by $o$. The crucial observation is that $e$ coincides with the image of $o$
under Wieland gyration. Thus, each even cell of $f$ and its corresponding even cell of $\WL(f)$ coincide.
By definition all edges and non-edges of $f$ incident to a vertex in $\mathcal{L}^N$ are preserved by left-Wieland gyration. In summary, $\WL(f)=f$.
\end{proof}

\subsection{TFPLs are eventually stable under Wieland gyration}

In this section, we will prove Theorem~\ref{Thm:EventuallyStable}. The idea of the proof is the following: when applying left-Wieland gyration to a TFPL, the drifters of the TFPL are globally moved to the right. Thus, after a finite number of applications of left-Wieland gyration, all drifters eventually disappear through the right boundary. As a consequence of Theorem~\ref{Thm:StableTFPL} a stable TFPL is then obtained.

In a TFPL of size $N$, there are $2N+1$ columns of vertices which we label from left to right from $1$ to $2N+1$.

\begin{Prop}\label{Prop:EventuallyStable}
Let $f$ be a TFPL of size $N$ that contains a drifter in the $n$-th column but no drifter  in the columns $1,\ldots,n-1$ to its left. Then $\WL(f)$ contains no drifter in any of the columns $1,\ldots,n$.
\end{Prop}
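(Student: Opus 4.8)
The plan is to analyze how left-Wieland gyration acts locally on columns $1,\dots,n$, using the hypothesis that all of $f$'s cells meeting columns $1,\dots,n-1$ are drifter-free. First I would observe that a drifter occupying a given column sits inside exactly two cells, one odd and one even, and that its presence in column $n$ of $\WL(f)$ can only be produced by the Wieland image of an odd cell $o$ of $f$ that meets columns $n-1$ and $n$ (recall that $\WL$ shifts the construction one unit to the right, so the odd cell of $f$ responsible for a drifter in column $n$ of $\WL(f)$ lies one unit to the left). Since $f$ has no drifter in columns $1,\dots,n-1$, such a cell $o$ has no drifter incident to its two left vertices; I would then go through the list of $16$ odd cells in Figure~\ref{ListCells} and check which of them, under $\Wie$, can possibly create a drifter spanning columns $n-1$ and $n$ on the right side (i.e.\ incident to the right vertices of $o'$). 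The key computation is that every odd cell whose two left vertices are not incident to a drifter is Wieland-mapped to a cell whose right side carries no drifter either — so that $\WL(f)$ has no drifter in columns $1,\dots,n$.

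More precisely, I would split the analysis according to whether the cells involved are interior or contain external edges, and handle the left boundary (columns $1$ and $2$, involving the inserted vertices $L'_i$) separately, since there the edges are governed by the rule in Definition~\ref{def:LeftWieland}(1) rather than by $\Wie$. For the left boundary one checks directly from that rule that no drifter is created in columns $1,2$: the only edges added between $L'_{i_j}$ and the old left vertices are a horizontal edge in column $1$–$2$ of type $L'_{i_j}L_{i_j}$ or a vertical edge, neither of which is (nor produces under the subsequent local picture) a drifter, because a drifter is by definition a specific $\vcenter{\hbox{\includegraphics[width=.0125\textwidth]{drifter.pdf}}}$ edge. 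For the interior part, the heart of the matter is a finite case check: I would verify, cell type by cell type from Figure~\ref{ListCells}, the implication ``\,$o$ has no drifter on its left vertices\,$\Rightarrow$\,$\Wie(o)$ has no drifter on its right vertices,'' which also uses that a drifter in column $n$ of $\WL(f)$, being incident to a vertex in column $n$, forces the responsible odd cell to be positioned so that its right vertices lie in columns $n-1,n$.

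I would organize the case check by first noting (as in the proof of Proposition~\ref{Prop:InstableTFPLs}) that if $o$'s two right vertices $x,y$ are themselves incident to a drifter in $f$, then since $o$ has no drifter on its left vertices, $o$ would have to contain a drifter spanning columns $n-1$–$n$, contradicting the hypothesis — so $x,y$ too are drifter-free, which already restricts $o$ to a short list of types. For those remaining types one inspects the Wieland image and confirms the right side of $o'$ is drifter-free. One then does the symmetric bookkeeping for the even cells of $\WL(f)$ meeting columns $\le n$, which arise as $\Wie$-images of odd cells of $f$ one unit to the left, all of which are drifter-free by hypothesis (they sit in columns $\le n-1$ of $f$); hence by Lemma~\ref{Lemma:CellsStableTFPL}-type reasoning these images carry no drifter. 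Combining the left-boundary check with the interior case check yields that $\WL(f)$ has no drifter in columns $1,\dots,n$.

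\textbf{Main obstacle.} I expect the genuine difficulty to be purely organizational: correctly tracking the one-unit rightward shift in Definition~\ref{def:LeftWieland} so that ``a drifter in column $n$ of $\WL(f)$'' is traced back to the correct odd cell of $f$, and then being exhaustive but not redundant in the case analysis over the $16$ odd cell types (plus external variants). The conceptual content is light — it is essentially the assertion that drifters propagate rightward under $\Wie$ and never leak leftward — but a clean proof needs a careful picture-based argument, ideally reusing the cell enumeration of Figure~\ref{ListCells} and the drifter-incidence observations already made in the proof of Proposition~\ref{Prop:InstableTFPLs}.
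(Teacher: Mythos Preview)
Your overall strategy matches the paper's: trace each potential drifter in $\WL(f)$ back to a single odd cell of $f$ and show that cell's Wieland image is drifter-free. But the concrete indexing --- precisely the step you flag as the main obstacle --- is off by one. After Wieland and the rightward shift, the odd cell of $f$ at columns $(j,j+1)$ becomes the even cell of $\WL(f)$ at columns $(j+1,j+2)$. Now a drifter in any TFPL is always the \emph{left} edge of an odd cell, equivalently the \emph{right} edge of an even cell (this is what underlies the set-up in the proof of Proposition~\ref{Prop:InstableTFPLs}). Hence a drifter in column $n$ of $\WL(f)$ is the right edge of the even cell of $\WL(f)$ at columns $(n-1,n)$, which pulls back to the odd cell of $f$ at columns $(n-2,n-1)$ --- not $(n-1,n)$ as you write.

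With the correct cell, all four of its vertices lie in columns $\le n-1$, so none is incident to a drifter of $f$; the proof of Lemma~\ref{Lemma:CellsStableTFPL} then forces this odd cell into $\mathfrak{O}$, and one checks directly that $\Wie(\mathfrak{O})\subseteq\mathfrak{E}$ (this is the ``crucial observation'' in the proof of Proposition~\ref{Prop007}). Thus the corresponding even cell of $\WL(f)$ is drifter-free, and since such even cells cover every drifter position in columns $1,\dots,n$, the proof is complete. This is exactly the paper's argument, and it makes your proposed 16-type case check unnecessary. By contrast, your attempt to work with the cell at $(n-1,n)$ cannot be salvaged as written: its right vertices lie in column $n$ and \emph{may} be incident to drifters of $f$, and the phrase ``a drifter spanning columns $n-1$--$n$'' is confused (a drifter is a vertical edge in a single column), so the contradiction you claim does not materialize.
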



\begin{proof} 
First of all, notice that by the definition of left-Wieland gyration, there is no vertex of $\mathcal{L}^{N\prime}$ incident to a drifter in $\WL(f)$.
By definition of $\WL$, the occurrence of a drifter in an even cell $e'$ of
$\WL(f)$ depends solely on the odd cell to the left of the corresponding even cell $e$ in $f$. By hypothesis, no odd cell of $f$ occurring to the left of the $(n-1)$-st column has a vertex incident to a drifter. It follows from the proof of Lemma ~\ref{Lemma:CellsStableTFPL} that all these odd cells belong to $\mathfrak{O}$. This entails that all even cells of $\WL(f)$ to the left of the $n$-th column
belong to $\mathfrak{E}$, and thus do not contain a drifter. Since these even cells cover all vertical edges in the columns $1,\ldots,n$, the proof is complete.
\end{proof}

\begin{proof}[Proof of Theorem~\ref{Thm:EventuallyStable}]
By immediate induction on the result of Proposition~\ref{Prop:EventuallyStable}, we know that the configuration  $\WL^{2N+1-n}(f)$ contains no drifter, and thus by Theorem~\ref{Thm:StableTFPL} it is stable under $\WL$, so that
\begin{equation}
\WL^m(f)=\WL^{2N+1-n}(f) 
\label{Equation:EventuallyStable}\end{equation}
for all $m\geq 2N+1-n$. Since the first column of vertices of a TFPL consists only of the vertex $L_1$, we have $n\geq 2$, which proves the theorem.
\end{proof}

\section{Applications of Wieland gyration on TFPLs}
\label{Section:applications} 

\subsection{Some linear relations}

The following was conjectured for Dyck words in~\cite{Thapper} and proved in~\cite{Nadeau1} using Wieland gyration on FPLs.

\begin{Prop}\label{Prop:LinearRelation}
Let $u$, $v$ and $w$ be binary words. Then
\begin{equation}
\sum_{u^+: u\stackrel{\mathrm{h}}{\longrightarrow}u^+} t_{u^+,v}^w=\sum_{v^+:v\stackrel{\mathrm{v}}{\longrightarrow}v^+}t_{u,v^+}^w.
\notag\end{equation}
\end{Prop}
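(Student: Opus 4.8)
The plan is to exhibit an explicit bijection between the two sets that are being enumerated on each side of the identity. On the left-hand side we have the disjoint union $\bigsqcup_{u^+:\, u\stackrel{\mathrm{h}}{\longrightarrow}u^+} T_{u^+,v}^w$, that is, all TFPLs with bottom boundary $w$, right boundary $v$, and left boundary any $u^+$ with $u\stackrel{\mathrm{h}}{\longrightarrow}u^+$; on the right-hand side we have $\bigsqcup_{v^+:\, v\stackrel{\mathrm{v}}{\longrightarrow}v^+} T_{u,v^+}^w$, all TFPLs with bottom boundary $w$, left boundary $u$, and right boundary any $v^+$ with $v\stackrel{\mathrm{v}}{\longrightarrow}v^+$. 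The natural candidate for the bijection is left-Wieland gyration with respect to $u$: given $f\in T_{u^+,v}^w$ with $u\stackrel{\mathrm{h}}{\longrightarrow}u^+$, Proposition~\ref{imageWielandTFPL} tells us that $\WL_u(f)$ is a TFPL with boundary $(u,v^+;w)$ for some $v^+$ with $v\stackrel{\mathrm{v}}{\longrightarrow}v^+$, so $\WL_u(f)$ lies in the right-hand set. Thus $\WL_u$ maps the left-hand union into the right-hand union.

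Next I would check that this map is a bijection by producing its inverse, which is right-Wieland gyration. If $g\in T_{u,v^+}^w$ with $v\stackrel{\mathrm{v}}{\longrightarrow}v^+$, then by Proposition~\ref{ImageRightWieland} the configuration $\WR_v(g)$ is a TFPL with boundary $(u^+,v;w)$ for some $u^+$ with $u\stackrel{\mathrm{h}}{\longrightarrow}u^+$, so $\WR_v(g)$ lies in the left-hand union. It remains to see that these two maps are mutually inverse, but this is precisely the content of Theorem~\ref{Thm:WielandBijectiveTFPL}: part~(1) gives $\WR_v(\WL_u(f))=f$ for any $f$ with boundary $(u^+,v;w)$ and $u\stackrel{\mathrm{h}}{\longrightarrow}u^+$, and part~(2) gives $\WL_u(\WR_v(g))=g$ for any $g$ with boundary $(u,v^+;w)$ and $v\stackrel{\mathrm{v}}{\longrightarrow}v^+$. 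Hence $\WL_u$ restricts to a bijection between the two unions, and comparing cardinalities term by term gives
\[
\sum_{u^+:\, u\stackrel{\mathrm{h}}{\longrightarrow}u^+} t_{u^+,v}^w
= \#\Bigl(\bigsqcup_{u^+} T_{u^+,v}^w\Bigr)
= \#\Bigl(\bigsqcup_{v^+} T_{u,v^+}^w\Bigr)
= \sum_{v^+:\, v\stackrel{\mathrm{v}}{\longrightarrow}v^+} t_{u,v^+}^w,
\]
which is the desired relation.

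There is essentially no hard step here once Theorem~\ref{Thm:WielandBijectiveTFPL} and Propositions~\ref{imageWielandTFPL} and~\ref{ImageRightWieland} are in hand; the only subtlety worth spelling out is that the domains and codomains match up exactly — that is, every target TFPL of the right shape is hit (surjectivity, handled by $\WR_v$ producing a valid preimage) and no two source TFPLs collide (injectivity, immediate from $\WR_v\circ\WL_u=\mathrm{id}$). One should also note the degenerate cases where $|u|_1\neq|v|_1$ or the constraints~\eqref{Necessary1} fail, in which case both sides are empty and the identity is trivially true; this can be dispatched in one sentence since $\WL_u$ preserves the bottom word $w$ and the relation $|u^+|_0=|u|_0$ is forced by $u\stackrel{\mathrm{h}}{\longrightarrow}u^+$. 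The whole argument is therefore a short assembly of previously established facts rather than a genuinely new computation.
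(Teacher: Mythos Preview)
Your proof is correct and follows exactly the same approach as the paper: use $\WL_u$ as a bijection from $\bigsqcup_{u^+} T_{u^+,v}^w$ to $\bigsqcup_{v^+} T_{u,v^+}^w$, with inverse $\WR_v$, invoking Theorem~\ref{Thm:WielandBijectiveTFPL} (and Propositions~\ref{imageWielandTFPL}, \ref{ImageRightWieland}) for the well-definedness and inverseness. The paper's version is simply a terser statement of the same argument.
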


\begin{proof}
 Indeed the function $\WL_u(\cdot)$ acts on all TFPLs with boundary $(u^+,v;w)$, while $\WR_v(\cdot)$ acts on TFPLs with boundary $(u,v^-;w)$. By Theorem~\ref{Thm:WielandBijectiveTFPL}, these functions are inverse of one another, and the result is obtained by taking cardinalities.
\end{proof}

\subsection{The inequality ~\eqref{Necessary3}}

This states that $\vert \lambda(u)\vert + \vert \lambda(v)\vert\leq\vert \lambda(w)\vert$ always holds for the boundaries $(u,v;w)$ of TFPLs. It was given in~\cite[Lemma 3.7]{Thapper} in the Dyck word case. Later, another proof in connection with TFPLs together with an orientation of the edges was given in~\cite{TFPL}. More precisely, it was shown there that in an oriented TFPL with boundary $(u,v;w)$, the quantity \hbox{$\vert\lambda(w)\vert-\vert\lambda(u)\vert-\vert\lambda(v)\vert$} counts 
occurrences of certain local patterns in the TFPL. 

We now give an independent proof based on the properties of Wieland gyration; the idea for this proof comes from the original one by Thapper, which can be seen as relying on  Wieland gyration on FPLs in an indirect way.

\begin{proof}[Proof of ~\eqref{Necessary3}] Let $f$ be a TFPL with boundary $(u,v;w)$. The proof is done by induction on $\vert\lambda(u)\vert$. In the case when $\vert\lambda(u)\vert=0$ we have \hbox{$\lambda(v)\subseteq\lambda(w)$} by Equation~\eqref{Necessary2}, which  implies \hbox{$\vert\lambda(v)\vert\leq\vert\lambda(w)\vert$}. 

Assume now $\vert\lambda(u)\vert\geq 1$. By removing a corner of $\lambda(u)$, there exists a Young diagram \hbox{$\lambda(u^-)\subseteq\lambda(u)$} with one cell less than $\lambda(u)$. In particular $\lambda(u)/\lambda(u^-)$ is a horizontal strip. 

We first want to prove that there exists $i>0$ such that $\WL_{u^-}^i(f)$ has right boundary $v^+\neq v$. Assume the contrary, that is the right boundary of 
$\WL_{u^-}^i(f)$ is $v$ for all $i>0$. Since there are only a finite number of TFPLs with boundary $(u^-,v;w)$, there exist integers $i_0,p>0$ such that
\[
\WL_{u^-}^{i_0+p}(f)=\WL_{u^-}^{i_0}(f).
\]
We can then apply $\WR^{i_0}_{v}$ to both sides of the identity, and by Theorem~\ref{Thm:WielandBijectiveTFPL} we obtain \hbox{$\WL_{u^-}^p(f)=f$}. 
But these configurations have left boundaries $u,u^-$ respectively and we assumed $u^-\neq u$, which is a contradiction.

Hence, let $i$ be a positive integer such that $\WL_{u^-}^i(f)$ has boundary $(u^-,v^+;w)$ where $v^+\neq v$. By
Proposition~\ref{imageWielandTFPL} we have $\lambda(v)\subsetneq\lambda(v^+)$ and therefore \hbox{$\vert\lambda(v)\vert+1\leq\vert\lambda(v^+)\vert$}.
Applying the induction hypothesis to $\WL_{u^-}^i(f)$ completes the proof:
\begin{equation*}
\vert\lambda(u)\vert+\vert\lambda(v)\vert=\vert\lambda(u^-)\vert+1+\vert\lambda(v)\vert\leq\vert\lambda(u^-)\vert+\vert\lambda(v^+)\vert\leq\vert\lambda(w)\vert.
\end{equation*}
\end{proof}

\subsection{Excesses $0,1$ and beyond}

For a TFPL with boundary  $(u,v;w)$, the nonnegative integer  \hbox{$\vert\lambda(w)\vert-\vert\lambda(u)\vert-\vert\lambda(v)\vert$} is called the \emph{excess} of $f$. 

\begin{Prop}
\label{prop:Excess0Stable}
If a TFPL has excess $0$, then it is stable.
\end{Prop}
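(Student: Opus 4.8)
The plan is to prove the contrapositive via the drifter characterization of Theorem~\ref{Thm:StableTFPL}: a TFPL with excess $0$ contains no drifter, hence is stable. So suppose $f$ has boundary $(u,v;w)$ and contains at least one drifter; I want to show its excess is strictly positive, i.e. $\vert\lambda(u)\vert+\vert\lambda(v)\vert<\vert\lambda(w)\vert$. The natural tool is left-Wieland gyration with respect to $u$ itself, together with the quantitative tracking of boundary words that has already been set up.

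First I would run the same induction/iteration argument used in the proof of~\eqref{Necessary3}, but starting from the observation that $f$ is \emph{not} stable. By Proposition~\ref{Prop:InstableTFPLs}, $\WL(f)\neq f$. More to the point, I would argue that iterating $\WL=\WL_u$ must at some stage change the right boundary word $v$: indeed, if the right boundary of $\WL^i(f)$ equalled $v$ for all $i\geq 0$, then since there are only finitely many TFPLs with boundary $(u,v;w)$ we would get $\WL^{i_0+p}(f)=\WL^{i_0}(f)$ for some $p>0$, and applying $\WR_v^{i_0}$ and Theorem~\ref{Thm:WielandBijectiveTFPL} would give $\WL^p(f)=f$; but by Lemma~\ref{WielandRightBoundaryTFPL} (and Proposition~\ref{Prop:EventuallyStable}, which guarantees the drifters keep being pushed right until one hits column $2N+1$) some iterate $\WL^k(f)$ has a vertex of $\mathcal{R}^N$ incident to a vertical edge, forcing a change of right boundary — a contradiction. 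So let $i\geq 1$ be minimal with right boundary of $\WL^i(f)$ equal to some $v^+\neq v$; by Proposition~\ref{imageWielandTFPL}, $v\stackrel{\mathrm{v}}{\to}v^+$ with $v\neq v^+$, hence $\lambda(v)\subsetneq\lambda(v^+)$ and $\vert\lambda(v^+)\vert\geq\vert\lambda(v)\vert+1$.

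Now $\WL^i(f)$ is a TFPL with boundary $(u,v^+;w)$, so by~\eqref{Necessary3} (already available) we get
\begin{equation*}
\vert\lambda(u)\vert+\vert\lambda(v)\vert+1\leq\vert\lambda(u)\vert+\vert\lambda(v^+)\vert\leq\vert\lambda(w)\vert,
\end{equation*}
which is exactly the statement that the excess of $f$ is at least $1$, hence positive; in particular $f$ does not have excess $0$. Contrapositively, a TFPL of excess $0$ is stable, which is the claim. (Note the bottom word $w$ is preserved throughout by Proposition~\ref{imageWielandTFPL}, so the inequality we invoke really is about the same $w$.)

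The main obstacle, and the step I would write out most carefully, is establishing that iterating $\WL_u$ on a non-stable $f$ genuinely forces the right boundary to change at some point — the finiteness-plus-$\WR_v$ argument is the same trick as in the proof of~\eqref{Necessary3}, but here the ``contradiction'' at the end is subtler: there one used $u^-\neq u$ to contradict $\WL_{u^-}^p(f)=f$, whereas here the words $u$ on both sides match, so one instead needs that $f$ being non-stable (equivalently, containing a drifter) is incompatible with $\WL^p(f)=f$ when combined with the fact that all intermediate right boundaries equal $v$. The cleanest route is: if all right boundaries in the orbit equal $v$, then by Lemma~\ref{WielandRightBoundaryTFPL} no $R_j$ in any $\WL^k(f)$ is incident to a vertical edge, so by Proposition~\ref{Prop:EventuallyStable} every drifter present in $f$ migrates strictly rightward under each step and, never being absorbed at the right boundary, must after at most $2N$ steps occupy a column $\geq 2N+1$ — impossible. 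This pins down the required $i$, and the rest is the short computation above.
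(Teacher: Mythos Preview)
Your argument is correct and gives a genuinely different proof from the paper's. The paper simply invokes an external result (\cite[Proposition~5.2]{TFPL}) asserting that TFPLs of excess $0$ contain no drifters, and then applies Theorem~\ref{Thm:StableTFPL}. Your route is self-contained within the present article: you show, via iterated $\WL_u$ together with Theorem~\ref{Thm:WielandBijectiveTFPL} and the eventual-stability machinery, that a TFPL with a drifter must after some iterations acquire a strictly larger right boundary $v^+$, and then you feed the resulting TFPL with boundary $(u,v^+;w)$ into inequality~\eqref{Necessary3} to force excess $\geq 1$. The paper's proof is a two-line citation; yours is longer but has the virtue of not leaving the paper, and in fact gives an alternative derivation of the ``no drifter in excess $0$'' fact itself.

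Two small points of presentation you should tighten. First, Proposition~\ref{Prop:EventuallyStable} is a statement about the \emph{leftmost column containing a drifter}, not about individual drifters ``migrating''; your phrasing suggests tracking drifters one by one, which is not what that proposition provides. Second, the cleanest way to extract the contradiction from $\WL^p(f)=f$ is simply to invoke Theorem~\ref{Thm:EventuallyStable}: since $\WL^{kp}(f)=f$ for all $k\geq 0$, choosing $kp\geq 2N-1$ makes $f=\WL^{kp}(f)$ stable, contradicting the presence of a drifter. This replaces the slightly informal ``column $\geq 2N+1$'' sentence and makes the argument airtight.
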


\begin{proof}
It is a consequence of~\cite[Proposition 5.2]{TFPL} that TFPLs of excess $0$ do not contain drifters, so we can conclude with Theorem~\ref{Thm:StableTFPL}.
\end{proof}

These TFPLs are known to be counted by Littlewood--Richardson coefficients~\cite{Nadeau2,TFPL} as recalled in the introduction.

In~\cite{TFPL}, configurations of excess $1$ were also studied in some detail and enumerated. The authors defined a number of \emph{moves} on such (oriented) configurations in order to transform them, and ultimately reach a configuration of excess $0$. It turns out that these complicated moves are essentially equivalent to a simple application of $\WL$, at least when the configuration is not stable. 

 Therefore stable configurations should first be studied and enumerated, and other configurations may then be related to them through Wieland gyration, in order to find for instance linear relations between their cardinalities. The feasibility of such an approach is in particular supported by Theorem~\ref{Thm:EventuallyStable}.

\bibliography{WielandTFPL}
\bibliographystyle{plain}

\end{document}